\newtheorem{theorem}{Theorem}
\newtheorem{lemma}[theorem]{Lemma}
\theoremstyle{definition}
\newtheorem{definition}{Definition}
\newtheorem{assumption}{Assumption}
\title{Online Control with Adversarial Disturbance for\\ Continuous-time Linear Systems}
\author{%
  Jingwei Li \\
  IIIS, Tsinghua University\\
  Shanghai Qizhi Institute \\
\texttt{ljw22@mails.tsinghua.edu.cn} \\
  \And
  Jing Dong \\
  The Chinese University of Hong Kong, Shenzhen \\
\texttt{jingdong@link.cuhk.edu.cn} \\
  \AND
  Can Chang \\
  IIIS, Tsinghua University \\
\texttt{cc22@mails.tsinghua.edu.cn} \\
  \And
  Baoxiang Wang \\
  The Chinese University of Hong Kong, Shenzhen \\
\texttt{bxiangwang@cuhk.edu.cn} \\
  \And
  Jingzhao Zhang \\
  IIIS, Tsinghua University \\
  Shanghai Qi zhi Institute \\
  \texttt{jingzhaoz@mail.tsinghua.edu.cn} \\
}
\begin{document}

\maketitle

\begin{abstract}
We study online control for continuous-time linear systems with finite sampling rates, where the objective is to design an online procedure that learns under non-stochastic noise and performs comparably to a fixed optimal linear controller. 
We present a novel two-level online algorithm, by integrating a higher-level learning strategy and a lower-level feedback control strategy. This method offers a practical and robust solution for online control, which achieves sublinear regret. Our work provides the first nonasymptotic results for controlling continuous-time linear systems with finite number of interactions with the system. Moreover, we examine how to train an agent in domain randomization environments from a non-stochastic control perspective. By applying our method to the SAC (Soft Actor-Critic) algorithm, we achieved improved results in multiple reinforcement learning tasks within domain randomization environments. Our work provides new insights into non-asymptotic analyses of controlling continuous-time systems. Furthermore, our work brings practical intuition into controller learning under non-stochastic environments.
\end{abstract}

\section{Introduction}

A major challenge in robotics is to deploy simulated controllers into real-world. This process, known as sim-to-real transfer, can be difficult due to misspecified dynamics, unanticipated real-world perturbations, and non-stationary environments. Various strategies have been proposed to address these issues, including domain randomization, meta-learning, and domain adaptation~\cite{hofer2021sim2real,chenunderstanding, hu2022provable}.
Although they have shown great effectiveness in experimental results, training agents within these  setups poses a significant challenge. To accommodate different environments, the strategies developed by agents tend to be overly conservative~\cite{mehta2020active, amiranashvili2021pre} or lead to suboptimal outcomes~\cite{vuong2019pick,mozian2020learning}.

In this work, we provide an analysis of the sim-to-real transfer problem from an online control perspective. Online control focuses on iteratively updating the controller after deployment (i.e., online) based on collected trajectories. Significant progress has been made in this field by applying insights from online learning to linear control problems~\cite{abbasi2011regret, abbasi2014tracking, cohen2018online, hazan2020nonstochastic, chen2021black, basei2022logarithmic, andrew2013tale, goel2019online}.

Following this line of work, we approach the sim-to-real transfer issue for continuous-time linear systems as a non-stochastic control problem, as explored in previous works~\cite{hazan2020nonstochastic, chen2021black, basei2022logarithmic}. These studies provide regret bounds for an online controller that lacks prior knowledge of system perturbations. However, a gap remains as no previous analysis has specifically investigated continuous-time systems, but real world systems often evolve continuously in time.

Existing literature on online continuous control is limited~\cite{vrabie2009adaptive, jiang2012computational, duncan1992least,
rizvi2018output}. Most continuous control research emphasizes the development of model-free algorithms, such as policy iteration, under the assumption of noise absence. Recently, \cite{basei2022logarithmic} examined online continuous-time linear quadratic control problem and achieves sublinear regret. However, it relies on the assumption of standard Brownian noise instead of non-stochastic noise that may not always hold true in real-world applications. This leads us to the crucial question:

\begin{center}
    \textit{Is it possible to design an online non-stochastic control algorithm \\in a continuous-time setting that achieves sublinear regret?}
\end{center}

Our work addresses this question  by proposing a two-level online controller. The higher-level controller symbolizes the policy learning process and updates the policy at a low frequency to minimize regret. Conversely, the lower-level controller delivers high-frequency feedback control input to reduce discretization error. Our proposed algorithm results in regret bounds for continuous-time linear control in the face of non-stochastic disturbances.

Furthermore, we implement the ideas from our theoretical analysis and test them in several experiments. Note that the key difference between our algorithm and traditional online policy optimization is that we utilize information from past states with some skips to enable faster adaptation to environmental changes. Although the aforementioned concepts are often adopted experimentally as frame stacking and frame skipping, there is relatively little known about the appropriate scenarios for applying these techniques. Our analysis and experiments demonstrate that these techniques are particularly effective in developing adaptive policies for uncertain environments. We choose the task of training agents in a domain randomization environment to evaluate our method, and the results confirm that these techniques substantially improve the agents' performance.

\section{Related Works}
The control theory of linear dynamical systems under disturbances has been thoroughly examined in various contexts, such as linear quadratic stochastic control~\cite{athans1971role}, robust control~\cite{stengel1994optimal,khalil1996robust}, system identification~\cite{goodwin1981discrete, kumar1983optimal, campi1998adaptive, ljung1998system}. However, most of these problems are investigated in non-robust settings, with robust control being the sole exception where adversarial perturbations in the dynamic are permitted. In this scenario, the controller solves for the optimal linear controller in the presence of worst-case noise. Nonetheless, the algorithms designed in this context can be overly conservative as they optimize over the worst-case noise, a scenario that is rare in real-world applications. We will elaborate on the difference between robust control and online non-stochastic control in Section 3.

\paragraph{Online Control}
There has been a recent surge of interest in online control, as demonstrate by studies such as~\cite{abbasi2011regret, abbasi2014tracking, cohen2018online}. In online control, the player interacts with the environment and updates the policy in each round aiming to achieve sublinear regret. In scenarios with stochastic Gaussian noise, \cite{cohen2018online} provides the first efficient algorithm with an $O(\sqrt{T})$ regret bound. However, in real-world applications, the assumption of Gaussian distribution is often unfulfilled.

\cite{agarwal2019online} pioneers research on non-stochastic online control, where the noises can be adversarial. Under general convex costs, they introduce the Disturbance-Action Policy Class. Using an online convex optimization (OCO) algorithm with memory, they achieve an $O(\sqrt{T})$ regret bound. Subsequent studies extend this approach to other scenarios, such as quadratic costs~\cite{basei2022logarithmic}, partial observations~\cite{simchowitz2020improper, simchowitz2020making} or unknown dynamical systems~\cite{hazan2020nonstochastic,chen2021black}. Other works yield varying theoretical guarantees like online competitive ratio~\cite{goel2022best, shi2020online}.

\paragraph{Online Continuous Control}
Compared to online control, there has been relatively little research on model-based continuous-time control. Most continuous control works focus on developing model-free algorithms such as policy iteration (e.g.~\cite{vrabie2009adaptive, jiang2012computational, rizvi2018output}), typically assuming zero-noise. This is because analyzing the system when transition dynamics are represented by differential equations, rather than recurrence formulas, poses a significant challenge.

Recently, \cite{basei2022logarithmic} studies online continuous-time linear quadratic control with standard Brownian noise and unknown system dynamics. They propose an algorithm based on the least-square method, which estimates the system's coefficients and solves the corresponding Riccati equation. The papers~\cite{shirani2022thompson, faradonbeh2023online} also focus on online control setups with continuous-time stochastic linear systems and unknown dynamics. They achieve $O(\sqrt{T}\log T)$ regret by different approaches. \cite{shirani2022thompson} uses the Thompson sampling algorithm to learn optimal actions. \cite{faradonbeh2023online} takes a randomized-estimates policy to balance exploration and exploitation.
The main difference between \cite{basei2022logarithmic, shirani2022thompson, faradonbeh2023online} and our paper is that they consider stochastic noise of Brownian motion which can be quite stringent and may fail in real-world applications, while the noise in our setup is non-stochastic. This makes our analysis completely different from theirs. 

\paragraph{Domain Randomization}
Domain randomization, which is proposed by~\cite{tobin2017domain}, is a commonly used technique for training agents to adapt to different (real) environments by training in randomized simulated environments. From the empirical perspective, many previous works focus on designing efficient algorithms for learning in a randomized simulated environment (by randomizing environmental settings, such as friction coefficient) such that the algorithm can adapt well in a new environment, \cite{muratore2019assessing, zakharov2019deceptionnet,mehta2020active, muratore2021data,muratore2022neural}. Other works study how to effectively randomize the simulated environment so that the trained algorithm would generalize well in other environments~\cite{vuong2019pick,mozian2020learning,tiboni2023dropo}. However, prior research has not explored how to apply certain theoretical analysis ideas to train agents in domain-randomized environments. Limited previous works, such as~\cite{chenunderstanding} and~\cite{hu2022provable}, concentrate on theoretically analyzing the sim-to-real gap within specific domain randomization models but they do not test their algorithms in real domain randomization environments.

\section{Problem Setting}

In this paper, we consider the online non-stochastic control for continuous-time linear systems. Therefore, we provide a brief overview below and define our notations.

\subsection{Continuous-time Linear Systems}
The Linear Dynamical System can be considered a specific case of a continuous Markov decision process with linear transition dynamics. The state transitions are governed by the following equation:
$$
\dot{x}_{t}=A x_t+B u_t+w_t \,,
$$
where $x_t$ is the state at time $t$, $u_t$ is the action taken by the controller at time $t$, and $w_t$ represents the disturbance at time $t$. Follow the setup of~\cite{agarwal2019online}, we assume $x_0=0$. We do not make any strong 
 assumptions about the distribution of $w_t$, and we also assume that the distribution of $w_t$ is unknown to the learner beforehand. This implies that the disturbance sequence $w_t$ can be selected adversarially.

When the action $u_t$ is applied to the state $x_t$, a cost $c_t(x_t, u_t)$ is incurred. Here, we assume that the cost function $c_t$ is convex. However, this cost is not known in advance and is only revealed after the action $u_t$ is implemented at time $t$. In the system described above, an online policy $\pi$ is defined as a function that maps known states to actions, i.e., $u_t = \pi(\{x_\xi | \xi \in [0,t]\})$. Our goal, then, is to design an algorithm that determines such an online policy to minimize the cumulative cost incurred. Specifically, for any algorithm $\mathcal{A}$, the cost incurred over a time horizon $T$ is:
$$
J_T(\mathcal{A})=\int_0^T c_t(x_t, u_t) dt \,.
$$
In scenarios where the policy is linear (i.e., a linear controller), such that $u_t=-K x_t$, we use $J(K)$ to denote the cost of a policy $K \in \mathcal{K}$ from a certain class $\mathcal{K}$.

\subsection{Difference between Robust and Online Non-stochastic Control}
While both robust and online non-stochastic control models incorporate adversarial noise, it's crucial to understand that their objectives differ significantly.

The objective function for robust control, as seen in~\cite{stengel1994optimal,khalil1996robust}, is defined as:
$$
\min_{u_1} \max_{w_{1: T}} \min_{u_2} \ldots \min_{u_t} \max_{w_T} J_T(\mathcal{A}) \,,
$$
Meanwhile, the objective function for online non-stochastic control, as discussed in~\cite{agarwal2019online}, is: 
$$
\min_{\mathcal{A}} \max_{w_{1: T}} (J_T(\mathcal{A})-\min_{K \in \mathcal{K}} J_T(K))  \,.
$$

Note that the robust control approach seeks to directly minimize the cost function, while online non-stochastic control targets the minimization of regret, which is the discrepancy between the actual cost and the cost associated with a baseline policy. Additionally, in robust control, the noise at each step can depend on the preceding policy, whereas in online non-stochastic control, all the noise is predetermined (though unknown to the player).
\subsection{Assumptions}
We operate under the following assumptions throughout this paper. To be concise, we denote $\|\cdot\|$ as the $L_2$ operator norm of the vector and matrix. Firstly, we make assumptions concerning the system dynamics and noise: 

\begin{assumption}\label{asmp:bounded_dynamic_matrix}
The matrices that govern the dynamics are bounded, meaning $\|A\| \leq \kappa_A$ and $\|B\| \leq \kappa_B$, where $\kappa_A$ and $\kappa_B$ are constants. Moreover, the perturbation and its derivative are both continuous and bounded: $\left\|w_t\right\| , \left\|\dot{w}_t\right\| \leq W$, with $W$ being a constant.
\end{assumption}

These assumptions ensure that we can bound the states and actions, as well as their first and second-order derivatives.
Next, we make assumptions regarding the cost function:

\begin{assumption} \label{asmp:bounded_cost}
The costs $c_t(x, u)$ are convex in $x$ and $u$. Additionally, if there exists a constant $D$ such that $\|x\|,\|u\| \leq D$, then we have the following inequalities of the costs:
$
\left|c_t(x, u)\right| \leq \beta D^2,\left\|\nabla_x c_t(x, u)\right\|,\left\|\nabla_u c_t(x, u)\right\| \leq G D
$, $|c_{t_1}(x,u)-c_{t_2}(x,u)| \le L|t_1-t_2|D^2$,
\end{assumption}

where $\beta$,$G$ and $L$ are constants corresponding to the cost function. This assumption implies that if the differences between states and actions are small, then the error in their cost will also be relatively small.

\subsection{Strongly Stable Policy}
We next describe our baseline policy class introduced in~\cite{cohen2018online}. Note that the continuous system and the discrete system are different. If we consider the approximation over a relatively small interval $h$, we get 
\begin{align*}
    x_{t+h} = x_t + \int^{t+h}_{s=t}\dot{x}_sds = &x_t + \int^{t+h}_{s=t}Ax_s + Bu_s + w_sds \\
    \approx& x_t + h(Ax_t+Bu_t + w_t) = (I+hA)x_t+hBu_t+hw_t \,.
\end{align*}
Therefore, if we consider the transition of a discrete system $x_{i+1} = \Tilde{A}x_i + \Tilde{B}u_i + \Tilde{w}_i$, we get the approximation $\Tilde{A} \approx I+hA$, $\Tilde{B} \approx hB$. Hence, we extend the definition of a strongly stable policy~\cite{cohen2018online, agarwal2019online} in the discrete system to the continuous system as follows:
\begin{definition}\label{def:linear_policy}
A linear policy $K$ is $(\kappa, \gamma)$-strongly stable if, for any $h > 0$ that is sufficiently small, there exist matrices $L_h, P$ such that $I + h(A-B K)= P L_h P^{-1} $, with the following two conditions:
\begin{enumerate}
\item The norm of $L_h$ is strictly smaller than unity and dependent on $h$, i.e., $\|L_h\| \leq 1-h\gamma$.
\item The controller and transforming matrices are bounded, i.e., $\|K\| \leq \kappa$ and $\|P\|,\left\|P^{-1}\right\| \leq \kappa$.
\end{enumerate}
\end{definition}

The above definition ensures the system can be stabilized by a linear controller $K$. 

\subsection{Regret Formulation}
To evaluate the designed algorithm, we follow the setup in~\cite{cohen2018online, agarwal2019online} and use regret, which
is defined as the cumulative difference between the cost
incurred by the policy of our algorithm and the cost incurred
by the best policy in hindsight. Let $\mathcal{K}$ denotes the class of strongly stable linear policies, i.e. $\mathcal{K}=\{K: K$ is $(\kappa, \gamma)$-strongly stable$\}$. Then we try to minimize the regret of algorithm:
$$
\min_{\mathcal{A}} \max_{w_{1: T}} \mathrm{Regret}(\mathcal{A}) = \min_{\mathcal{A}} \max_{w_{1: T}}(J_T(\mathcal{A})-\min _{K \in \mathcal{K}} J_T(K)) \,.
$$

\section{Algorithm Design}
In this section, we outline the design of our algorithm and formally define the concepts involved in deriving our main theorem. We summarize our algorithm design as follows:

First, we discretize the total time period $T$ into smaller intervals of length $h$. We use the information at each point $x_h, x_{2h}, \ldots$ and $u_h, u_{2h}, \ldots$ to approximate the actual cost of each time interval, leveraging the continuity assumption. This process does introduce some discretization errors.

Next, we employ the Disturbance-Action policy (DAC)~\cite{agarwal2019online}. This policy selects the action based on the current time step and the estimations of disturbances from several past steps. This policy can approximate the optimal linear policy in hindsight when we choose suitable parameters. However, the optimal policy $K^\ast$ is unknown, so we cannot directly acquire the optimal choice. To overcome this, we employ the OCO with memory framework~\cite{anava2015online} to iteratively adjust the DAC policy parameter $M_t$ to approximate the optimal solution $M^*$.

After that, we introduce the concept of the ideal state $y_t$ and ideal action $v_t$ that approximate the actual state $x_t$ and action $u_t$. Note that both the state and policy depend on all DAC policy parameters $M_1, M_2, \ldots, M_t$. Yet, the OCO with memory framework only considers the previous $H$ steps. Therefore, we need to consider ideal state and action. $y_t$ and $v_t$ represent the state the system would reached if it had followed the DAC policy $\left\{M_{t-H}, \ldots, M_t\right\}$ at all time steps from $t-H$ to $t$, under the assumption that the state $x_{t-H}$ was $0$.

From all the analysis above, we can decompose the regret as three parts: the discretization error $R_1$, the regret of the OCO with memory $R_2$, and the approximation error between the ideal cost and the actual cost $R_3$.

Then we will formally introduce out method and define all the concepts. In the subsequent discussion, we use shorthand notation to denote the cost, state, control, and disturbance variables $c_{ih}$, $x_{ih}$, $u_{ih}$, and $w_{ih}$ as $c_i$, $x_i$, $u_i$, and $w_i$, respectively.

First, we need to define the Disturbance-Action Policy Class(DAC) for continuous systems:
\begin{definition}\label{def:dac}
The Disturbance-Action Policy Class(DAC) is defined as:
    \begin{align*}
        u_t=-K x_t+\sum_{i=1}^{l} M^{i}_t \hat{w}_{t-i} \,,
    \end{align*}
where $K$ is a fixed strongly stable policy, $l$ is a parameter that signifies the dimension of the policy class, $M_t = \{M^1_t, \ldots , M^l_t\}$ is the weighting parameter of the disturbance at step $t$, and $\hat{w}_{t}$ is the estimated disturbance:
\begin{align}\label{equ:noise}
    \hat{w}_t = \frac{x_{t+1} - x_t - h(Ax_t + Bu_t)}{h} \,.
\end{align}
\end{definition}
We note that this definition differs from the DAC policy in discrete systems~\cite{agarwal2019online} as we utilize the estimation of disturbance over an interval $[t, t+h]$ instead of only the noise in time $t$. It counteracts the second-order residue term of the Taylor expansion of $x_t$ and is also an online policy as it only requires information from the previous state.

Our higher-level controller adopts the OCO with memory framework. A technical challenge lies in balancing the approximation error and OCO regret. To achieve a low approximation error, we desire the policy update interval $H$ to be inversely proportional to the sampling distance $h$. However, this relationship may lead to large OCO regret. To mitigate this issue, we introduce a new parameter $m = \Theta(\frac{1}{h})$, representing the lookahead window. We update the parameter $M_t$ only once every $m$ iterations, further reducing the OCO regret without negatively impacting the approximation error:
\begin{align*}\label{equ:OCO}
    M_{t+1} = \left\{
		\begin{aligned}
		& \Pi_{\mathcal{M}}\left(M_t-\eta \nabla g_t(M)\right) & &{\text{if }t \text{ mod }  m == 0} \,, \\
		&M_t & & {\text{otherwise}} \,.
		\end{aligned}
		\right.
\end{align*}
Where $g_t$ is a function corresponding to the loss function $c_t$ and we will introduce later in Algorithm~\ref{alg}.
For notational convenience and to avoid redundancy, we denote $\Tilde{M}_{[t/m]} = M_{t}$. We can then define the ideal state and action. Due to the properties of the OCO with memory structure, we need to consider only the previous $Hm$ states and actions, rather than all states. As a result, we introduce the definition of the ideal state and action.
During the interval $t \in [im, (i+1)m - 1]$, the learning policy remains unchanged, so we could define the ideal state and action follow the definition in~\cite{agarwal2019online}:

\begin{definition}\label{def:ide-state}
The ideal state $y_t$ and action $v_t$ at time $t\in [im, (i+1)m - 1]$ are defined as 
    \begin{align*}
    y_t = x_t(\Tilde{M}_{i-H}, ..., \Tilde{M}_i), v_t=-K y_t+\sum_{j=1}^{l} M_{i}^j w_{t-i} \,.
\end{align*}
where the notation indicates that we assume the state $x_{t-H}$ is $0$ and that we apply the DAC policy $\left(\Tilde{M}_{i-H}, \ldots, \Tilde{M}_i\right)$ at all time steps from $t-Hm$ to $t$.
\end{definition}
We can also define the ideal cost in this interval follow the definition in~\cite{agarwal2019online}:
\begin{definition}\label{def:ideal}
The ideal cost function during the interval $t \in [im, (i+1)m - 1]$ is defined as follows:
\begin{align*}
    f_i\left(\Tilde{M}_{i-H}, \ldots, \Tilde{M}_i\right)=\sum_{t = im}^{(i+1)m - 1}c_t\left(y_t\left(\Tilde{M}_{i-H}, \ldots, \Tilde{M}_{i}\right), v_t\left(\Tilde{M}_{i-H}, \ldots, \Tilde{M}_i\right)\right) \,.
\end{align*}
\end{definition}

With all the concepts presented above, we are now prepared to introduce our algorithm:
\begin{algorithm}[H]
    \caption{Continuous two-level online control algorithm} \label{alg}
    \begin{algorithmic}
    \STATE \textbf{Input:} step size $\eta$, sample distance $h$, policy update parameters $H, m$, parameters $\kappa, \gamma, T$.
    \STATE Define sample numbers $n = \lceil T/h\rceil$, OCO policy update times $p = \lceil n/m \rceil$.
    \STATE Define DAC policy update class 
    $\mathcal{M} = \left\{\Tilde{M}=\left\{\Tilde{M}^{1} \ldots \Tilde{M}^{Hm}\right\}:\left\|\Tilde{M}^{i}\right\| \leq 2h\kappa^3(1-\gamma)^{i-1}\right\}$.
    \STATE Initialize $M_0 \in \mathcal{M}$ arbitrarily.
    \FOR{$k = 0, \ldots, p-1$}
        \FOR{$s = 0, \ldots, m-1$}
            \STATE Denote the discretization time $r = km + s$. 
            \STATE Use the action $u_t=-K x_r+h\sum_{i=1}^{Hm} \Tilde{M}_{k}^{i} \hat{w}_{r-i}$ during the time period $t\in [rh, (r+1)h]$.
            \STATE Observe the new state $x_{r+1}$ at time $(r+1)h$ and record $\hat{w}_r$ according to Equation (\ref{equ:noise}).
        \ENDFOR 
        \STATE Define the function $g_k(M) = f_k(M,\ldots,M)$.
        \STATE Update OCO policy $\Tilde{M}_{k+1} = \Pi_{\mathcal{M}}\left(\Tilde{M}_k-\eta \nabla g_k(\Tilde{M}_k)\right)$.
    \ENDFOR
    \end{algorithmic}
\end{algorithm}

\section{Main Result}

In this section, we present the primary theorem of online continuous control regret analysis:

\begin{restatable}{theorem}{main}
\label{thm:main}
Under Assumption \ref{asmp:bounded_dynamic_matrix}, \ref{asmp:bounded_cost}, a step size of $\eta =
\Theta(\sqrt{\frac{m}{Th}})$, and a DAC policy update frequency $m = \Theta(\frac{1}{h})$, Algorithm \ref{alg} attains a regret bound of
\begin{align*}
J_T(\mathcal{A})-\min _{K \in \mathcal{K}} J_T(K) \leq O(nh(1-h\gamma)^{\frac{H}{h}}) + O(\sqrt{nh}) + O(Th) \,.
\end{align*}
With the sampling distance $h = \Theta(\frac{1}{\sqrt{T}})$, and the OCO policy update parameter $H = \Theta(\log(T))$, Algorithm \ref{alg} achieves a regret bound of
\begin{align*}
J_T(\mathcal{A})-\min _{K \in \mathcal{K}} J_T(K) \leq O\left(\sqrt{T} \log \left(T\right)\right) \,.
\end{align*}
\end{restatable}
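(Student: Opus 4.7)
The plan is to follow the standard three-way regret decomposition already foreshadowed by the authors, suitably adapted to the continuous-time interface. Writing $\mathcal{A}$ for the algorithm's policy and $K^\ast \in \mathcal{K}$ for the benchmark, I would introduce
\[
J_T(\mathcal{A}) - J_T(K^\ast) = R_1 + R_3 + R_2,
\]
where $R_1$ is the discretization error that arises when the integrals $\int_0^T c_t(x_t,u_t)\,dt$ and $\int_0^T c_t(x_t^{K^\ast},u_t^{K^\ast})\,dt$ are replaced by their Riemann sums $h\sum_i c_i(x_i,u_i)$ and $h\sum_i c_i(x_i^{K^\ast},u_i^{K^\ast})$, $R_3$ is the gap between the realized discrete cost and the ideal cost $\sum_k f_k(\tilde M_{k-H},\ldots,\tilde M_k)$ built from Definitions~\ref{def:ide-state}--\ref{def:ideal} (plus the approximation error between the best $M^\star \in \mathcal{M}$ and $K^\ast$), and $R_2$ is the OCO-with-memory regret of the outer loop against $M^\star$.

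For $R_1$, I would first establish that $\|x_t\|$, $\|u_t\|$, $\|\dot x_t\|$ and $\|\dot u_t\|$ are uniformly bounded along the algorithm's trajectory by some $D = \poly(\kappa,\kappa_A,\kappa_B,\gamma^{-1},W)$, using Assumption~\ref{asmp:bounded_dynamic_matrix}, the geometric cap $\|\tilde M^i\|\le 2h\kappa^3(1-\gamma)^{i-1}$ built into $\mathcal{M}$, and $(\kappa,\gamma)$-strong stability of $K$ to propagate states through the closed loop $I+h(A-BK)$. Then the Lipschitz-in-$t$ part of Assumption~\ref{asmp:bounded_cost} together with the gradient bounds give $|c_t(x_t,u_t)-c_i(x_i,u_i)| = O(h)$ for each $t\in[ih,(i+1)h]$; summing the $n = \lceil T/h\rceil$ intervals gives $R_1 = O(Th)$.

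For $R_3$, I would exploit the fact that the ideal state $y_t$ is, by construction, what the closed loop would have reached from $x_{t-Hm}=0$ under the current window of parameters $(\tilde M_{k-H},\ldots,\tilde M_k)$, so $x_t - y_t$ is exactly the propagation of $x_{t-Hm}$ through $Hm$ discrete steps of $I+h(A-BK)$. By Definition~\ref{def:linear_policy} this contracts like $\kappa^2(1-h\gamma)^{Hm}$; with $m=\Theta(1/h)$ the exponent is $\Theta(H/h)$. Combining with Lipschitzness of $c$, the uniform bounds from the previous step, and summing $n$ steps each weighted by $h$ yields $R_3 = O\bigl(nh(1-h\gamma)^{H/h}\bigr)$. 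The auxiliary approximation term $J_T(M^\star)-J_T(K^\ast)$ is handled by the standard fact that any $(\kappa,\gamma)$-strongly stable linear $K^\ast$ admits a DAC representation whose truncation error to $Hm$ terms is exponentially small in $Hm$, which is of the same order and absorbed.

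For $R_2$, I would invoke the OCO-with-memory analysis of \cite{anava2015online} applied to $g_k(M)=f_k(M,\ldots,M)$ over the convex set $\mathcal{M}$ with memory length $H$. The two required ingredients are (i) coordinate-wise Lipschitzness of $g_k$, obtained by chaining the cost-gradient bound from Assumption~\ref{asmp:bounded_cost} with derivatives $\partial y_t/\partial \tilde M^i$ that decay geometrically by strong stability, and (ii) the diameter of $\mathcal{M}$, controlled by $2h\kappa^3(1-\gamma)^{i-1}$. With $p = n/m$ outer updates and $\eta=\Theta(\sqrt{m/(Th)})$, the standard bound, after accounting for the explicit factors of $h$ and $m$ in the Lipschitz constant and diameter, gives $R_2 = O(\sqrt{nh})$. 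Substituting $h=\Theta(1/\sqrt T)$, $m=\Theta(\sqrt T)$, and $H=\Theta(\log T)$ with the constant in $H$ chosen so that $(1-h\gamma)^{H/h}\le e^{-\gamma H} = T^{-\Omega(1)}$ large enough to absorb $nh = T$, all three terms collapse to $O(\sqrt T\log T)$. I expect the principal obstacle to be the $R_3$ step: the discrete-time argument of \cite{agarwal2019online} updates $M_t$ every step, whereas here it is frozen for $m=\Theta(1/h)$ consecutive steps, so one must track how the frame-skip interacts with both the closed-loop contraction $(1-h\gamma)^{Hm}$ and the ideal-state definition, while still producing decay fast enough (after multiplication by $nh = T$) to leave a sublinear residual.
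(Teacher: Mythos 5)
Your plan tracks the paper's own proof almost step for step: the same split into discretization error, actual-versus-ideal cost, OCO-with-memory regret, and DAC approximation of the comparator $K^\ast$ (Lemmas~\ref{lem:dis}, \ref{lem:app}, \ref{lem:oco}, \ref{lem:opt}), the same telescoping construction $M^i=(K-K^\ast)(I+h(A-BK^\ast))^{i-1}$ behind the comparator step, the same invocation of \cite{anava2015online} with $p=n/m$ outer updates, and the same final parameter choices. So the route is not different; the problem is one under-specified step.

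The genuine gap is the uniform boundedness claim, which is exactly the step the paper identifies as the main continuous-time difficulty (Lemmas~\ref{lem:bound} and \ref{lem:dif}). You assert that $\|x_t\|,\|u_t\|,\|\dot x_t\|$ are bounded by ``propagating states through the closed loop $I+h(A-BK)$,'' but that discrete-time argument is not available by itself here: between sampling instants the state obeys $\dot x_t=Ax_t+Bu_t+w_t$ with the control frozen, so intra-interval growth is governed by the possibly unstable $A$ rather than $A-BK$; moreover the DAC policy feeds back the \emph{estimated} disturbances $\hat w_t$ of Equation~(\ref{equ:noise}), and $\hat w_t-w_t$ is a second-order Taylor residue of size $O(h\|\ddot x_t\|)$. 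This creates a circular dependence --- bounding the states requires bounding $\hat w_t$, which requires bounding $\ddot x_t$, which requires bounding $x_t$ and $\dot x_t$ --- that the paper resolves by induction over sampling intervals combined with Gronwall's inequality inside each interval, yielding the constants $D_1,D_2,D_3,W_0$ and an admissible step-size threshold $h_0$. Your proposal never mentions $\hat w_t$ or second derivatives, so as written the boundedness step (and with it every constant $D$, the Lipschitz and gradient bounds feeding $R_2$, and the $W_0$ appearing in the $R_3$ bound) is unsupported; once you supply the Gronwall-plus-induction argument, the rest of your outline goes through as in the paper.
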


Theorem \ref{thm:main} demonstrates a regret that matches the regret of a discrete system~\cite{agarwal2019online}. Despite the analysis of a continuous system differing from that of a discrete system, we can balance discretization error, approximation error, and OCO with memory regret by selecting an appropriate update frequency for the policy. Here, $O(\cdot)$ and $\Theta(\cdot)$ are abbreviations for the polynomial factors of universal constants in the assumption.

While we defer the detailed proof to the appendix, we outline the key ideas and highlight them below.

\paragraph{Challenge and Proof Sketch}
\begin{wrapfigure}{r}{0.45\textwidth}
  \centering
  \includegraphics[width=0.4\textwidth]{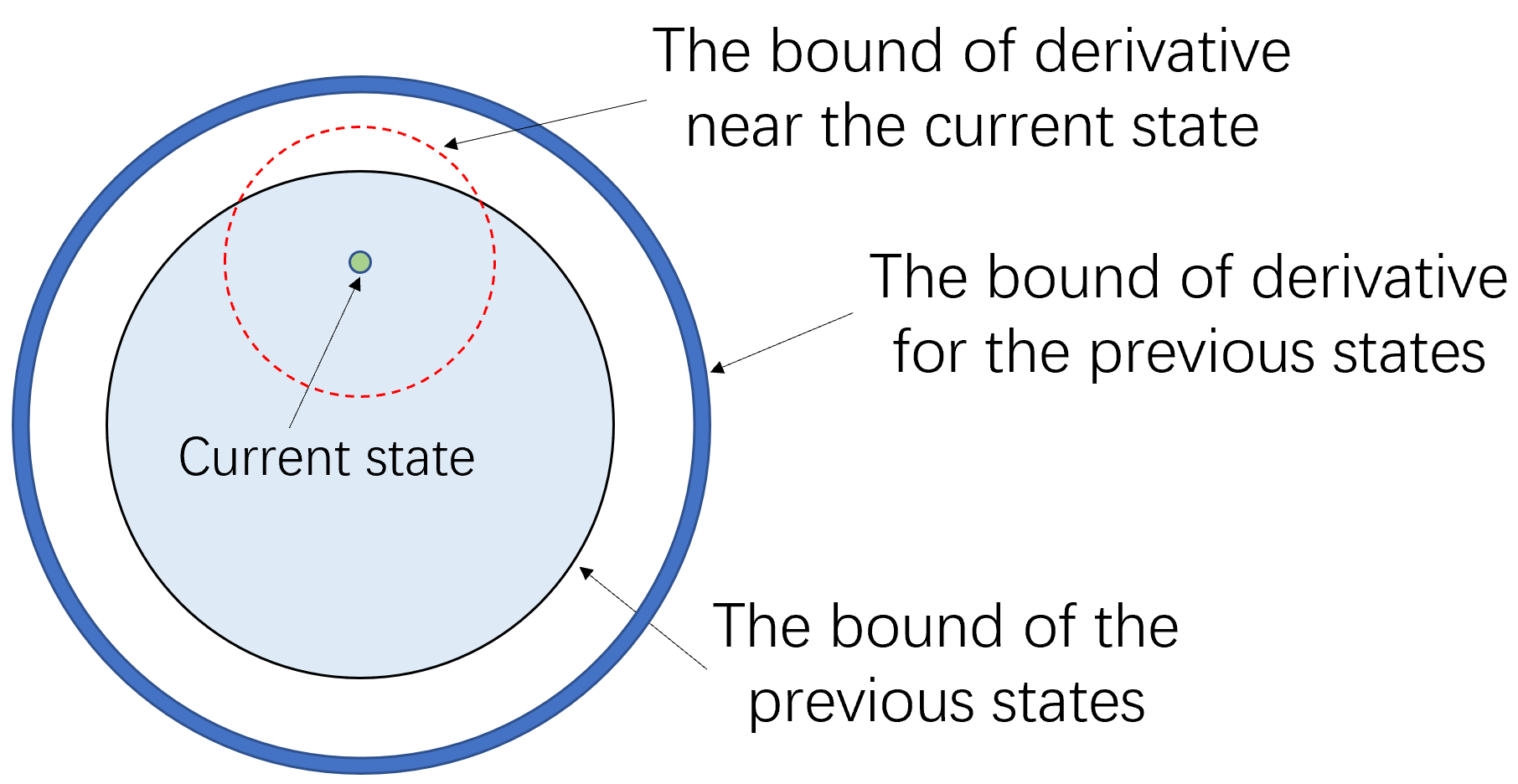}  
  \caption{Bounding the states and their derivatives separately. We employ Gronwall's inequality with the induction method to bound the states.}
    \label{fig:my_label}
\end{wrapfigure}

We first explain why we cannot directly apply the methods for discrete nonstochastic control from~\cite{agarwal2019online} to our work. To utilize Assumption \ref{asmp:bounded_cost}, it is necessary first to establish a union bound over the states.
In a discrete-time system, it can be easily proved by applying the dynamics inequality $\|x_{t+1}\| \le a \|x_t\| + b$ (where $a < 1$) and the induction method presented in~\cite{agarwal2019online}. However, for a continuous-time system, a different approach is necessary because we only have the differential equation instead of the state recurrence formula.

To overcome this challenge, we employ Gronwall's inequality to bound the first and second-order derivatives in the neighborhood of the current state. We then use these bounded properties, in conjunction with an estimation of previous noise, to bound the distance to the next state. Through an iterative application of this method, we can argue that all states and actions are bounded.

Another challenge is that 
we need to discretize the system but we must overcome 
the curse of dimensionality caused by discretization. In continuous-time systems, the number of states is inversely proportional to the discretization parameter $h$, which also determines the size of the OCO memory buffer. Our regret is primarily composed of three components:  the error caused by discretization $R_1$, the regret of OCO with memory $R_2$ and the difference between the actual cost and the approximate cost $R_3$. The discretization error $R_1$ is $O(hT)$, therefore if we achieve $O(\sqrt{T})$ regret, we must choose $h$ no more than $O(\frac{1}{\sqrt{T}})$. 

If we update the OCO with memory parameter at each timestep follow the method in~\cite{agarwal2019online}, we will incur the regret of OCO with memory $R_2 = O(H^{2.5}\sqrt{T})$. The difference between the actual cost and the approximate cost $R_3 = O(T(1-h\gamma)^{H})$. To achieve sublinear regret for the third term, we must choose $H = O(\frac{\log T}{h\gamma})$, but since $h$ is no more than $O(\frac{1}{\sqrt{T}})$, $H$ will be larger than $\Theta(\sqrt{T})$, therefore the second term $R_2$ will definitely exceed $O(\sqrt{T})$.

Therefore, we adjust the frequency of updating the OCO parameters by introducing a new parameter $m$, using a two-level approach and update the OCO parameters once in every $m$ steps. This will incur the third term $R_3 = O(T(1-h\gamma)^{Hm})$ but keep the OCO with memory regret $R_2 = O(H^{2.5}\sqrt{T})$, so we can choose $H = O(\frac{\log T}{\gamma})$ and $m=O(\frac{1}{h})$. Then the term of $R_2$ is $O(\sqrt{T}\log T)$ and we achieve the same regret compare with the discrete system.

\section{Experiments}
\label{sec:experiment}
In this section, we apply our theoretical analysis to the practical training of agents. First we highlight the key difference between our algorithm and traditional online policy optimization. 
\begin{enumerate}
    \item \textit{Stack:} While standard online policy optimization learns the optimal policy from the current state $u_t = \phi(x_t)$, an optimal non-stochastic controller employs the DAC policy as outlined in Definition~\ref{def:dac}. Leveraging information from past states aids the agent in adapting to dynamic environments.
    \item \textit{Skip:} Different from the analysis in~\cite{agarwal2019online}, in a continuous-time system we update the state information every few steps, rather than updating it at every step. This solves the curse of dimensionality caused by discretization in continuous-time system. 
\end{enumerate}
The above inspires us with an intuitive strategy for training agents by stacking past observations with some observations to skip. We denote this as \textit{Stack \& skip} for convenience. \textit{Stack \& skip} is frequently used as a heuristic in reinforcement learning, yet little was known about when and why such a technique could boost agent performance.

How should we evaluate our algorithm in a non-stochastic environment? We opt for learning an optimal policy within a domain randomization environment. In this context, each model's parameters are randomly sampled from a predetermined task distribution. We train policies to optimize performance across various simulated models~\cite{tremblay2018training, muratore2019assessing}.

\begin{wrapfigure}{r}{0.65\textwidth}
  \centering
  \vspace{-0.3cm}
\includegraphics[width=0.6\textwidth]{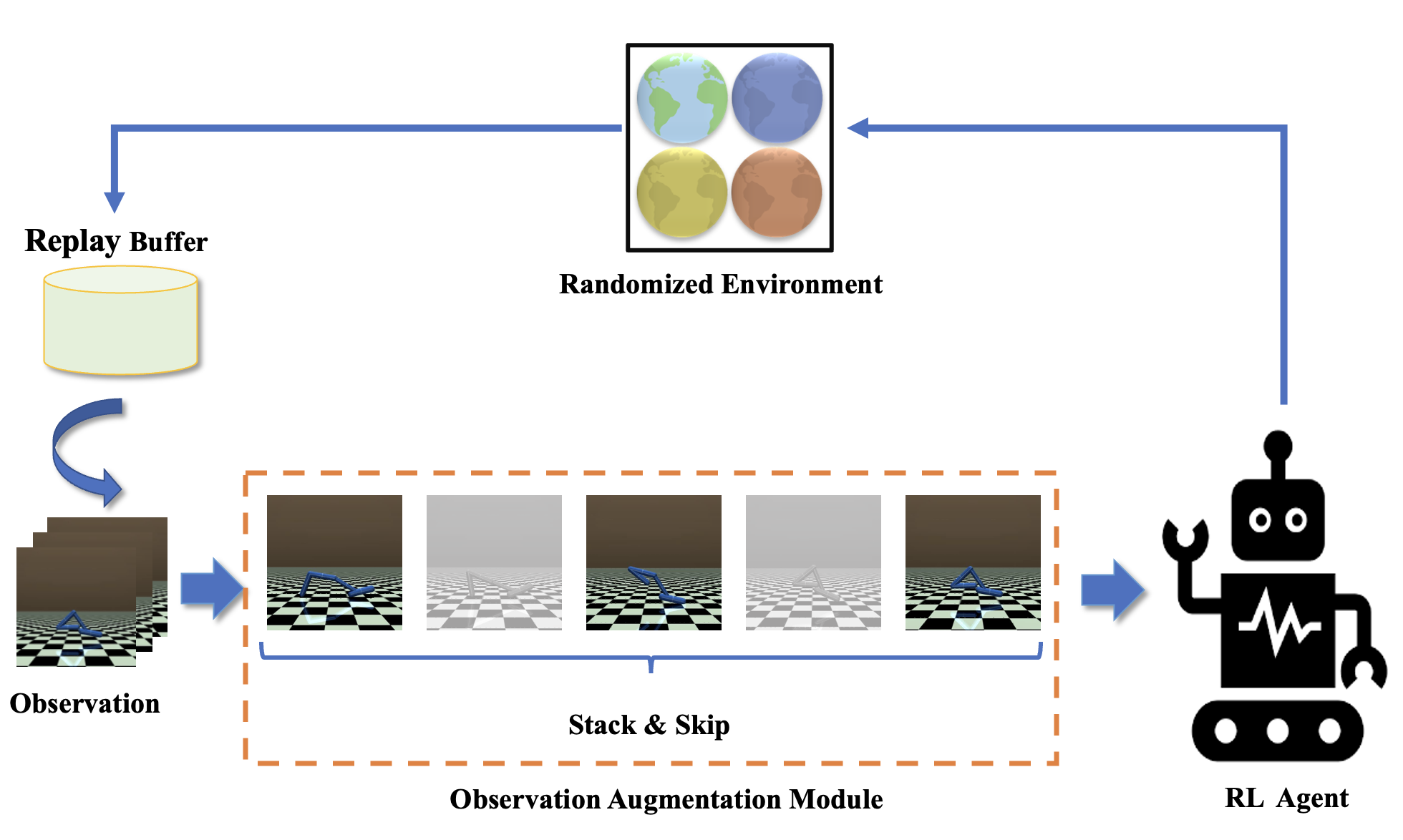}  
  \caption{Leverage past observation of states with some skip.}
    \label{fig:teaser}
\end{wrapfigure}
We observe that learning in Domain Randomization (DR) significantly differs from stochastic or robust learning problems. In DR, sampling from environmental variables occurs at the beginning of each episode, rather than at every step, distinguishing it from stochastic learning where randomness is step-wise independent and identically distributed. This episodic sampling approach allows agents in DR to exploit environmental conditions and adapt to episodic changes within an episode.
On the other hand, robust learning focuses on worst-case scenarios depending on an agent's policy. DR, in contrast, is concerned with the distribution of conditions aimed at broad applicability rather than worst-case perturbations.

In the context of non-stochastic control, the disturbance, while not disclosed to the learner beforehand, remains fixed throughout the episode and does not adaptively respond to the control policy. This setup in non-stochastic control shows a clear parallel to domain randomization: fixed yet unknown disturbances in non-stochastic control mirror the unknown training environments in DR. As the agent continually interacts with these environments, it progressively adapts, mirroring the adaptive process observed in domain randomization.
Therefore, we propose evaluating our algorithm within a domain randomization training task. Subsequently, we introduce the details of our experimental setup:

\begin{wraptable}{r}{0.6\textwidth}
  \centering
  \begin{tabular}{|c|c|c|}
\hline
\textbf{Environment} & \textbf{Parameters}    & \textbf{DR distribution} \\ \hline
\multirow{4}{*}{Hopper} & Joint damping & [0.5, 1.5]                  \\ 
                        & Foot friction       & [1, 3]                   \\
                        & Height of head      & [1.2, 1.7]                 \\
                        & Torso size & [0.025, 0.075]                \\ \hline
\multirow{3}{*}{Half-Cheetah} & Joint damping    & [0.005, 0.015]                  \\
                              & Foot friction       & [3, 7]                   \\
                              & Torso size      & [0.04, 0.06]                 \\ \hline
\multirow{3}{*}{Walker2D} & Joint damping & [0.05, 0.15]                  \\
                        & Density       & [500, 1500]                   \\
                        & Torso size & [0.025, 0.075]                \\ \hline
\end{tabular}
\caption{The DR distributions of environment.}
\label{env_param}
\vspace{-0.1cm}
\end{wraptable}

\paragraph{Environment Setting} 
We conduct experiments on the hopper, half-cheetah, and walker2d benchmarks using the MuJoCo simulator~\cite{Todorov2012MuJoCoAP}. The randomized parameters include environmental physical parameters such as damping and friction, as well as the agent properties such as torso size. We set the range of our domain randomization to follow a distribution with default parameters as the mean value, shown in Table~\ref{env_param}. When training in the domain randomization environment, the parameter is uniformly sampled from this distribution. To analyze the result of generalization, we only change one of the parameters and keep the other parameters as the mean of its distribution in each test environment. We conducted experiments using NVIDIA A40 graphics card.

\paragraph{Algorithm Design and Baseline}
We design a practical meta-algorithm that converts any standard deep RL algorithm into a domain-adaptive algorithm, shown in Figure~\ref{fig:teaser}.
In this algorithm, we augment the original state observation \( o_t^{\mathrm{old}} \) at time \( t \) with past observations, resulting in \( o_t^{\mathrm{new}} = [o_t^{\mathrm{old}}, o_{t-m}^{\mathrm{old}}, \ldots, o_{t-(h-1)m}^{\mathrm{old}}] \). Here \( h \) is the number of past states we leverage and \( m \) is the number of states we skip when we get each of the past states. For clarity in our results, we selected the SAC algorithm for evaluation.
We use a variant of Soft Actor-Critic~(SAC)~\cite{nikishin2022primacy} and leverage past states with some skip as our algorithm. We compare our algorithm with the standard SAC algorithm training on domain randomization environments as our baseline.

\paragraph{Impact of Frame Stack and Frame Skip}
To understand the effects of the frame stack number $h$ and frame skip number $m$, we carried out experiments in the hopper environment with different $h$ and $m$. For each parameter we train with 3 random seeds and take the average. Figure~\ref{fig:frame_stack} shows that the performance increases significantly when the frame stack number is increased from $1$ to $3$, and remains roughly unchanged when the frame stack number continues to climb up. Figure~\ref{fig:frame_skip} shows that the optimal frame skip number is $3$, while both too large or too small frame skip numbers result in sub-optimal results. Therefore, in the following experiments we fix the parameter $h=3$, $m=3$. We train our algorithm with this parameter and standard SAC on hopper and test the performance on more environments. Figure~\ref{tab:hopper} shows that our algorithm outperforms the baseline in all environments.

\begin{figure}[h]
    \centering
    \begin{minipage}[b]{0.49\linewidth} 
        \includegraphics[width=\linewidth]{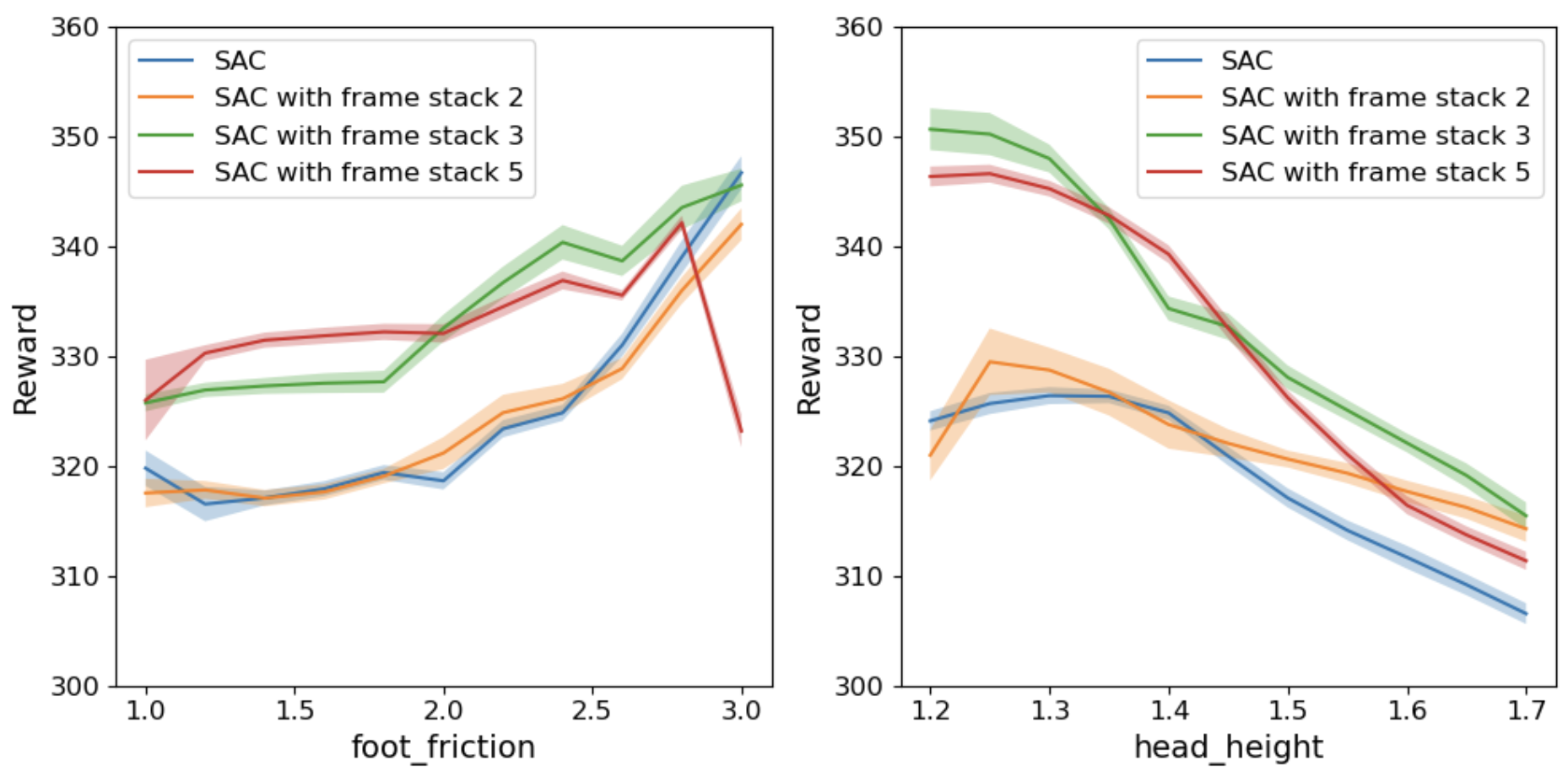}
        \caption{Impact of frame stack number.}
        \label{fig:frame_stack}
    \end{minipage}
    \hfill 
    \begin{minipage}[b]{0.49\linewidth} 
        \includegraphics[width=\linewidth]{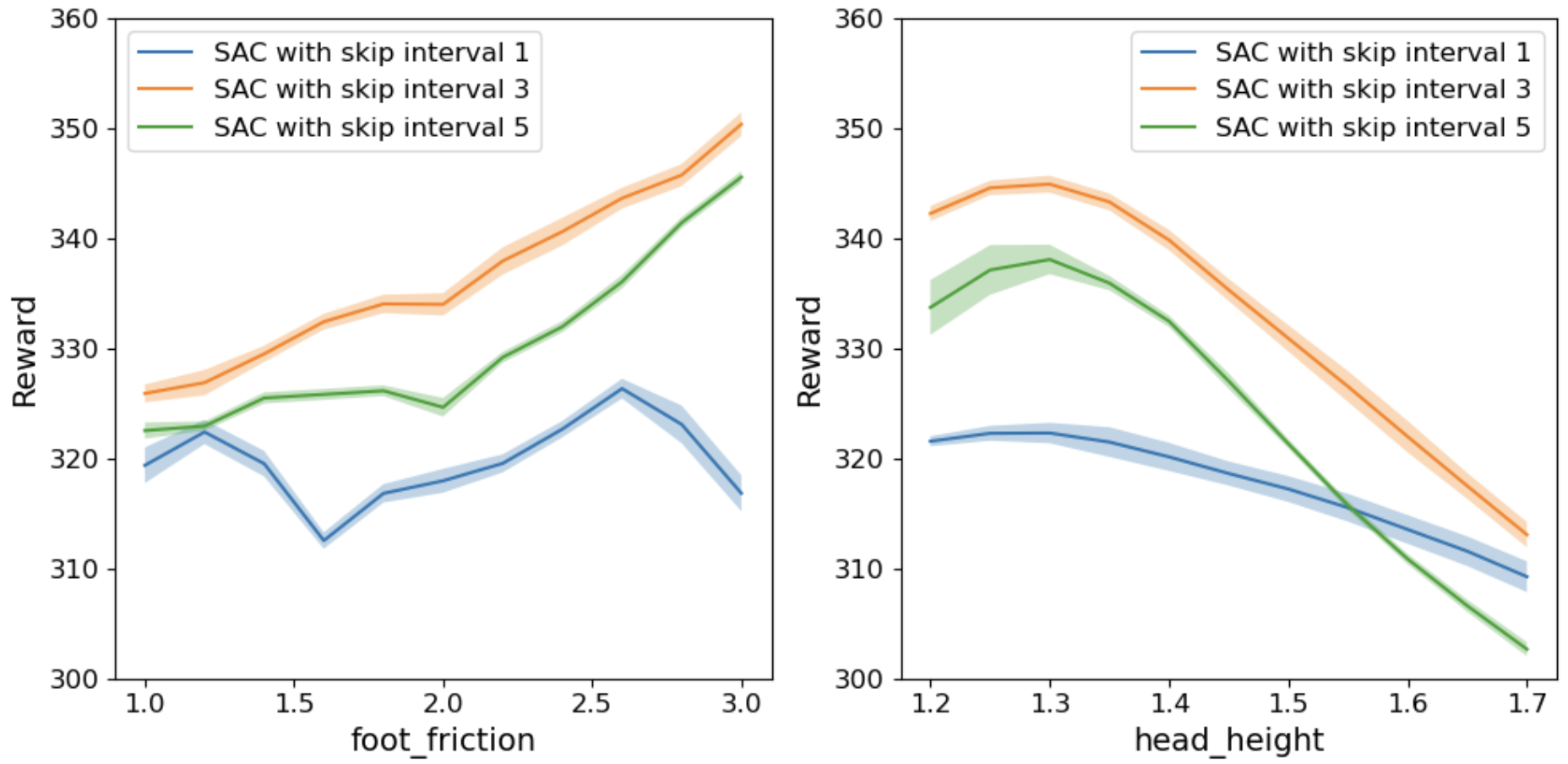}
        \caption{Impact of frame skip number.}
        \label{fig:frame_skip}
    \end{minipage}
\end{figure}

\begin{figure}[h]
    \centering
\includegraphics[scale = 0.32]{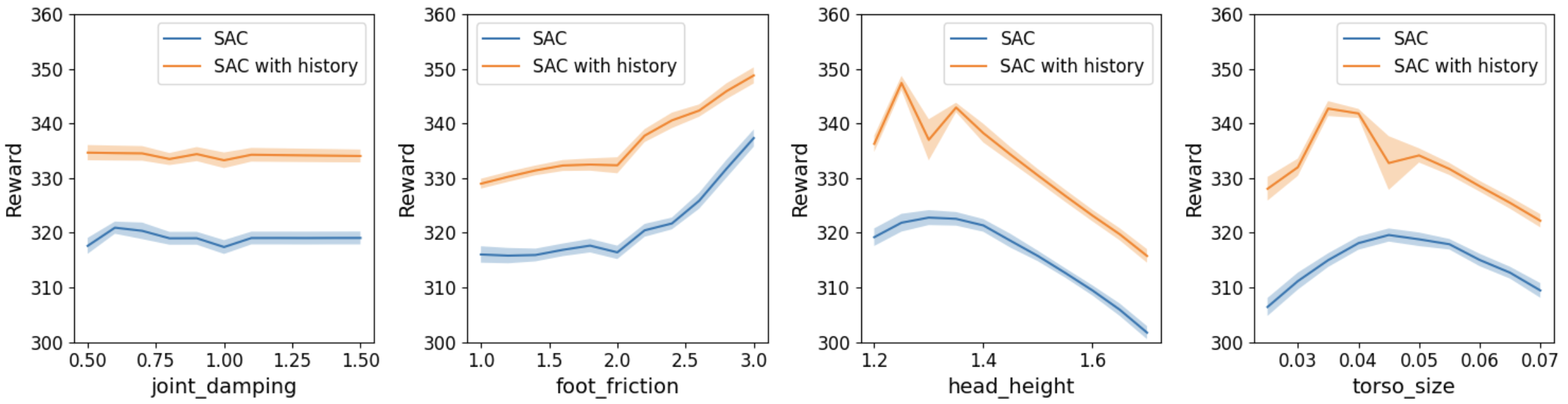}
\caption{Agents' reward in various test environments of hopper.}
\label{tab:hopper}
\end{figure}

\paragraph{Results on Other Environments}
Each algorithm was trained using three distinct random seeds in the half-cheetah and walker2d domain randomization (DR) environments. Consistent with previous experiments, we employed a frame stack number of $h=3$ and frame skip number of $m=3$. The comparative performance of our algorithm and the baseline algorithm, across various domain parameters, is presented in Figure~\ref{fig:walker2d}. The result clearly demonstrates that our algorithm consistently outperforms the baseline in all evaluated test environments.

\begin{figure}[h]
    \centering
    \includegraphics[scale=0.31]{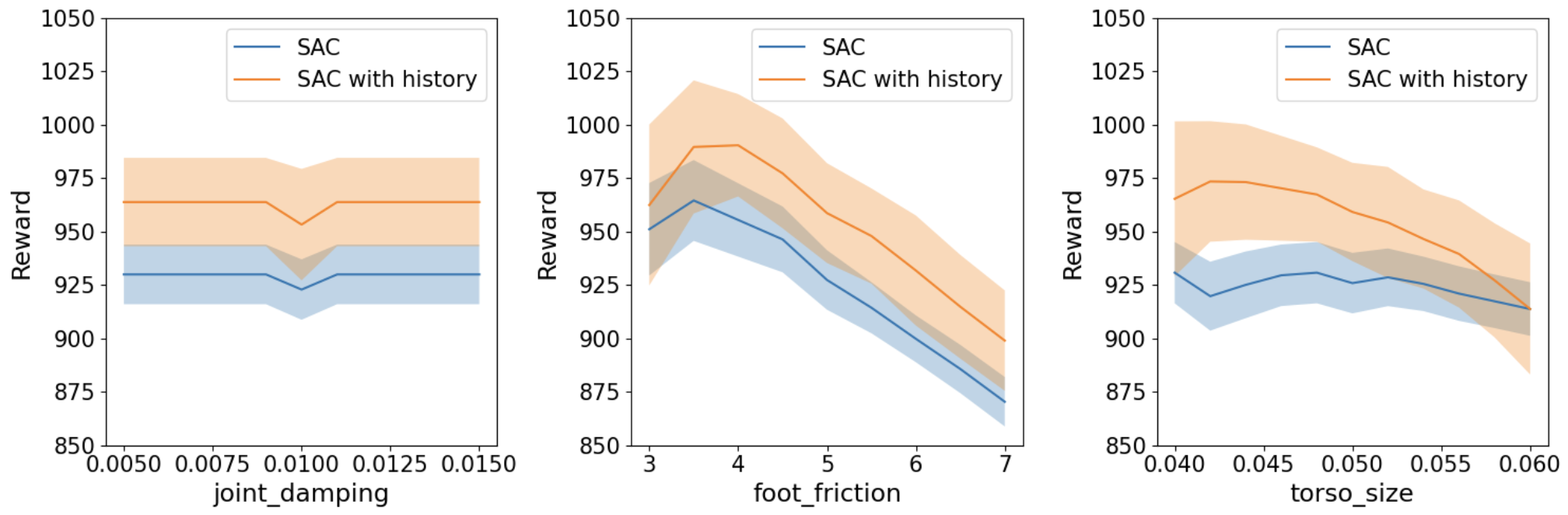}
    \vspace{1mm}
    \includegraphics[scale=0.26]{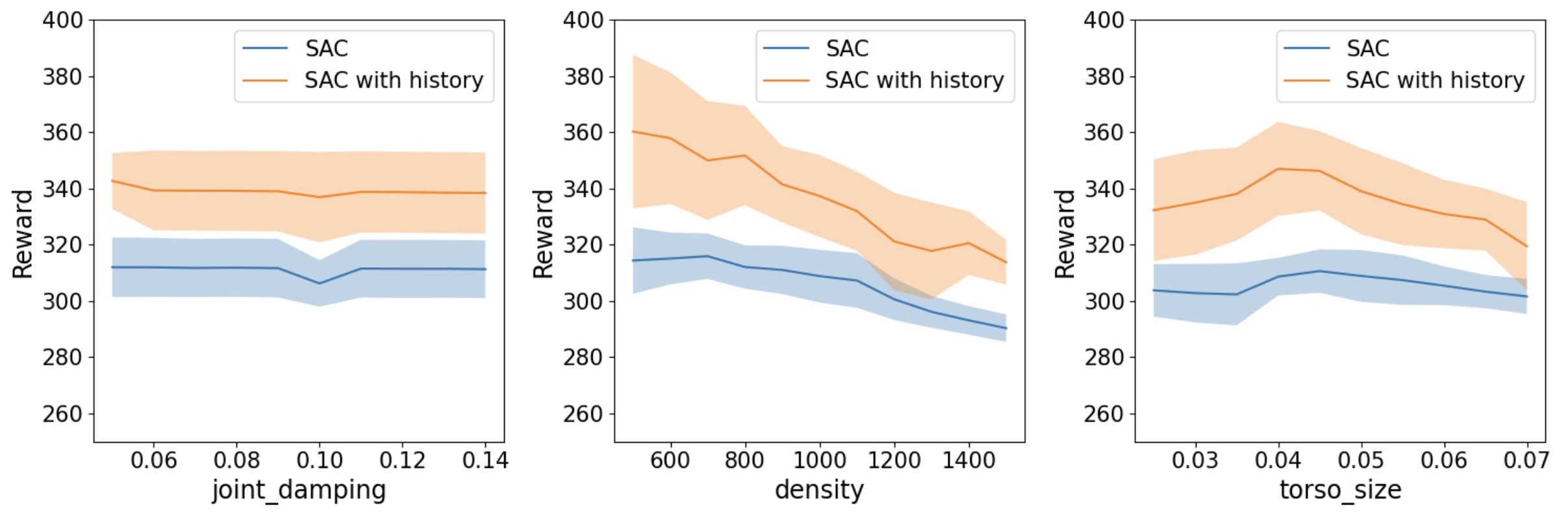}
    \caption{Performance in half-cheetah(Top) and walker2d(Bottom).}
    \label{fig:walker2d}
\end{figure}

\section{Conclusion, Limitations and Future Directions}
\label{sec:conclusion}

In this paper, we propose a two-level online controller for continuous-time linear systems with adversarial disturbances, aiming to achieve sublinear regret. This approach is grounded in our examination of agent training in domain randomization environments from an online control perspective. At the higher level, our controller employs the Online Convex Optimization (OCO) with memory framework to update policies at a low frequency, thus reducing regret. The lower level uses the DAC policy to align the system's actual state more closely with the idealized setting.

In our empirical evaluation, applying our algorithm's core principles to the SAC (Soft Actor-Critic) algorithm led to significantly improved results in multiple reinforcement learning tasks within domain randomization environments. This highlights the adaptability and effectiveness of our approach in practical scenarios.

 It is important to note that our theoretical analysis depends on the known dynamics of the system and the assumption of convex costs. This reliance could represent a limitation to our method, as it may not adequately address scenarios where these conditions do not hold or where system dynamics are incompletely understood. For future research, there are several promising directions in online non-stochastic control of continuous-time systems. These include extending our methods to systems with unknown dynamics, exploring the impact of assuming strong convexity in cost functions, and shifting the focus from regret to the competitive ratio. Further research can also explore how to utilize historical information more effectively to enhance agent training in domain randomization environments. This might involve employing time series analysis instead of simply incorporating parameters into neural network training.

\bibliography{ref}

\clearpage
\appendix

In the appendix we define $n$ as the smallest integer greater than or equal to $\frac{T}{h}$, and we use the shorthand $c_{ih}$, $x_{ih}$, $u_{ih}$, and $w_{ih}$ as $c_i$, $x_i$, $u_i$, and $w_i$, respectively. First we provide the proof of our main theorem there.
\section{Proof of Theorem \ref{thm:main}}
\main*

\begin{proof}
    We denote $u^\ast_t = K^\ast x^\ast_t$ as the optimal state and action that follows the policy specified by $K^\ast$, where $K^\ast = \arg\max_{K \in \mathcal{K}} J_T(K)$. 

We then discretize and decompose the regret as follows:
\begin{align*}
    J_T(\mathcal{A})-\min_{K \in \mathcal{K}} J_T(K) &=\int_0^T c_t(x_t, u_t) dt - \int_0^T c_t(x^{*}_t, u^{*}_t) dt \\
    &= \sum_{i=0}^{n-1} \int_{ih}^{(i+1)h} c_t(x_t, u_t) dt - \sum_{i=0}^{n-1} \int_{ih}^{(i+1)h} c_t(x^\ast_t, u^\ast_t) dt \\
    &= h \left(\sum_{i=0}^{n-1} c_{i}(x_{i}, u_{i}) - \sum_{i=0}^{n-1} c_{i}(x^*_{i}, u^*_{i})\right) + R_0 \,,
\end{align*}
where $R_0$ represents the discretization error.

We define $p$ as the smallest integer greater than or equal to $\frac{n}{m}$, then the first term can be further decomposed as
\begin{align*}
    & \sum_{i=0}^{n-1} c_{i}(x_{i}, u_{i}) - \sum_{i=0}^{n-1} c_{i}(x^*_{i}, u^*_{i}) \\ 
    = & \sum_{i=0}^{p - 1} \sum_{j=im}^{(i+1)m - 1} c_{i}(x_{i}, u_{i}) - \sum_{i=0}^{p - 1} \sum_{j=im}^{(i+1)m - 1} c_{i}(x^*_{i}, u^*_{i}) \\ 
    = & \sum_{i=0}^{p - 1} \left(\sum_{j=im}^{(i+1)m - 1} c_{i}(x_{i}, u_{i}) - \sum_{j=im}^{(i+1)m - 1} c_{i}(y_{i}, v_{i})\right) + \sum_{i=0}^{p - 1} \sum_{j=im}^{(i+1)m - 1}c_{i}(y_{i}, v_{i}) - \sum_{i=0}^{p - 1} \sum_{j=im}^{(i+1)m - 1} c_{i}(x^*_{i}, u^*_{i}) \\ 
    = & \sum_{i=0}^{p - 1} \left(\sum_{j=im}^{(i+1)m - 1} c_{i}(x_{i}, u_{i}) - f_i(\Tilde{M}_{i-H}, \ldots , \Tilde{M_i})\right) + \sum_{i=0}^{p - 1} f_i(\Tilde{M}_{i-H}, \ldots , \Tilde{M_i}) \\
    &- \min_{M \in \mathcal{M}}\sum_{i=0}^{p - 1} f_i(M, \ldots , M) 
    + \min_{M \in \mathcal{M}}\sum_{i=0}^{p - 1} f_i(M, \ldots , M)- \sum_{i=0}^{p - 1} \sum_{j=im}^{(i+1)m - 1} c_{i}(x^*_{i}, u^*_{i}) \,,
\end{align*}
where the last equality is by the definition of the idealized cost function. 

Let us denote 
\begin{align*}
    R_1 = \ &  \sum_{i=0}^{p - 1} \left(\sum_{j=im}^{(i+1)m - 1} c_{i}(x_{i}, u_{i}) - f_i(\Tilde{M}_{i-H}, \ldots , \Tilde{M_i})\right) \,, \\
    R_2 = \ &  \sum_{i=0}^{p - 1} f_i(\Tilde{M}_{i-H}, \ldots , \Tilde{M_i}) - \min_{M \in \mathcal{M}}\sum_{i=0}^{p - 1} f_i(M, \ldots , M) \,, \\
    R_3 = \ & \min_{M \in \mathcal{M}}\sum_{i=0}^{p - 1} f_i(M, \ldots , M)- \sum_{i=0}^{p - 1} \sum_{j=im}^{(i+1)m - 1} c_{i}(x^*_{i}, u^*_{i}) \,.
\end{align*}

Then we have the regret decomposition as
$$\mathrm{Regret}(T) = h(R_1 + R_2 + R_3) + O(hT) \,.$$

We then separately upper bound each of the four terms.

The term $R_0$ represents the error caused by discretization, which decreases as the number of sampling points increases and the sampling distance $h$ decreases. This is because more sampling points make our approximation of the continuous system more accurate. Using Lemma \ref{lem:dis}, we get the following upper bound: $R_0 \le O(hT)$.

The term $R_1$ represents the difference between the actual cost and the approximate cost. For a fixed $h$, this error decreases as the number of sample points looked ahead $m$ increases, while it increases as the sampling distance $h$ decreases. This is because the closer adjacent points are, the slower the convergence after approximation. By Lemma \ref{lem:app} we can bound it as $R_1 \le O(n(1-h\gamma)^{Hm})$.

The term $R_2$ is incurred due to the regret of the OCO with memory algorithm. Note that this term is determined by learning rate $\eta$ and the policy update frequency $m$. Choosing suitable parameters and using Lemma \ref{lem:oco}, we can obtain the following upper bound: $R_2 \le O(\sqrt{n/h})$.

The term $R_3$ represents the difference between the ideal optimal cost and the actual optimal cost. Since the accuracy of the DAC policy approximation of the optimal policy depends on its degree of freedom $l$, a higher degree of freedom leads to a more accurate approximation of the optimal policy. We use Lemma \ref{lem:opt} and choose $l = Hm$ to bound this error: $R_3 \le O(n(1-h\gamma)^{Hm})$.

By summing up these four terms and taking $m = \Theta(\frac{1}{h})$, we get:
\[\text{Regret}(T) \le O(nh(1-h\gamma)^{\frac{H}{h}}) + O(\sqrt{nh}) + O(hT) \,.\]
Finally, we choose $h = \Theta\left(\frac{1}{\sqrt{T}}\right)$, $m = \Theta\left(\frac{1}{h}\right)$, $H = \Theta(log(T))$, the regret is bounded by \[\text{Regret}(T) \le O(\sqrt{T}\log(T)) \,.\]

\end{proof}

\section{Key Lemmas}
In this section, we will primarily discuss the rationale behind the proof of our key lemmas. 
First, we need to prove all the states and actions are bounded. 
\begin{restatable}{lemma}{dif}
\label{lem:dif}
    Under Assumption \ref{asmp:bounded_dynamic_matrix} and \ref{asmp:bounded_cost}, choosing arbitrary $h$ in the interval $[0, h_0]$ where $h_0$ is a constant only depends on the parameters in the assumption, we have for any $t$ and policy $M_i$, $\|x_t\|, \|y_t\|, \|u_t\|, \|v_t\| \le D$, $\|\dot{x}_t\| \le D$, $\|x_t - y_t\|, \|u_t - v_t\| \le \kappa^2(1+\kappa)(1-h\gamma)^{Hm+1} D$. In particular, taking all the $M_t = 0$ and $K = K^*$, we can also obtain the inequality of the optimal solution: $\|x^*_t\|, \|u^*_t\| \le D$.
\end{restatable}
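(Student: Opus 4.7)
The plan is to induct along the sampling grid, combining the strong-stability contraction from Definition~\ref{def:linear_policy} with Gronwall's inequality to lift grid-point bounds to the full continuous trajectory. I would first select $h_0$ small enough that all second-order Taylor remainders in the one-step evolution are dominated by the $(1-h\gamma)$ contraction factor; this is the constant $h_0$ whose existence the statement only asserts. As a preliminary, I would bound the estimated disturbance $\hat w_r$ from~\eqref{equ:noise}: expanding $x_{r+1}$ around $x_r$ and using $\|\dot w_t\|\le W$, one obtains $\hat w_r = w_r + O(h)$ with the implicit constant depending on $\kappa_A, \kappa_B, W$ and the (yet-to-be-proved) bound on $x,u$ over $[rh,(r+1)h]$. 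Combined with the policy-class constraint $\|\tilde M^i\| \le 2h\kappa^3(1-\gamma)^{i-1}$ and the geometric tail $\sum_{i=1}^{Hm}(1-\gamma)^{i-1} = O(1)$, this gives a uniform bound on the disturbance-action contribution of the DAC policy.

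The main induction then proceeds on the grid index $k$. Integrating $\dot x_t = Ax_t + Bu_t + w_t$ over $[kh,(k+1)h]$ and substituting the DAC policy yields a recurrence of the form
\[
x_{(k+1)h} = \bigl(I + h(A - BK)\bigr)\,x_{kh} + hB\,u^{\mathrm{dac}}_k + h\,w_{kh} + O(h^2)\,,
\]
where $u^{\mathrm{dac}}_k = \sum_{i=1}^{Hm}\tilde M_k^i \hat w_{kh - i}$ collects the DAC correction and the $O(h^2)$ residue is controlled by the induction hypothesis and the derivative bound inherited from it. Strong stability gives $\|(I+h(A-BK))^j\| \le \kappa^2(1-h\gamma)^j$, so unrolling the recursion and summing the geometric tail bounds $\|x_{kh}\|$ by a constant $D$ that depends only on $\kappa,\gamma,W,\kappa_A,\kappa_B$. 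To extend this grid bound to arbitrary $t$, I would apply Gronwall to the integral inequality $\|x_t\| \le \|x_{kh}\| + \int_{kh}^t(\kappa_A\|x_s\| + \kappa_B\|u_s\| + W)\,ds$; this inflates the constant by at most $e^{h_0(\kappa_A + \kappa_B\kappa)}$, which can be absorbed into $D$ by shrinking $h_0$. The derivative bound $\|\dot x_t\|\le D$ is then immediate from the dynamics, and the bounds on $y_t, v_t$ follow by the same induction with the simpler initial condition $y_{t-Hm}=0$. The optimal-trajectory bounds for $x^\ast, u^\ast$ follow by specializing to $\tilde M \equiv 0$ and $K = K^\ast$, which reduces to a pure strongly-stable closed-loop system on which the same argument applies verbatim.

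For the difference $\|x_t - y_t\|$, I would subtract the two dynamics over $[t-Hm, t]$ to get $\frac{d}{dt}(x-y) = (A-BK)(x-y) + \varepsilon_t$, with $\varepsilon_t = O(h)$ and starting mismatch $x_{t-Hm}-y_{t-Hm}=x_{t-Hm}$ of norm at most $D$. Strong stability contracts this difference by $\kappa^2(1-h\gamma)^{Hm}$; the continuous-time Gronwall correction absorbs an extra $(1-h\gamma)$ factor so that the exponent becomes $Hm+1$, and the multiplicative $(1+\kappa)$ folds in $\|u_t - v_t\| \le \kappa\|x_t - y_t\|$ so that a single bound covers both state and action differences. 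The hard part will be propagating the $O(h^2)$ grid-recurrence residue through the induction without corrupting the strong-stability contraction, since this is what forces the uniform choice of $h_0$ and where every constant in the statement has to be tracked through both the discrete recursion and the continuous Gronwall step simultaneously.
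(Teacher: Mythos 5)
Your uniform-boundedness argument (induction along the sampling grid, Gronwall on each interval to control $\dot x$ and $\ddot x$, the estimate $\|\hat w_t - w_t\| = O(h)$, geometric summation via strong stability, and a self-consistent choice of constants valid for all $h \le h_0$) is essentially the paper's proof of Lemma~\ref{lem:bound}, and your treatment of $x^*, u^*$ by specializing to $M \equiv 0$, $K = K^*$ matches the paper as well. The genuine gap is in your bound on $\|x_t - y_t\|$. You subtract the dynamics to get $\frac{d}{dt}(x-y) = (A-BK)(x-y) + \varepsilon_t$ with $\varepsilon_t = O(h)$ and assert that the Gronwall correction lets the contraction absorb this forcing into an extra $(1-h\gamma)$ factor. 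It cannot: integrating an $O(h)$ forcing against the decaying factor over the lookback window (of continuous length $Hmh = \Theta(H)$) produces an additive term of order $h/\gamma$ that does not decay in $H$, so your argument only yields $\|x_t-y_t\| \le \kappa^2(1-h\gamma)^{Hm}D + O(h)$. That additive $O(h)$ is fatal downstream: in Lemma~\ref{lem:app} it contributes $n\cdot O(h) = O(T)$ to $R_1$, i.e.\ linear regret, so the stated inequality would not be established by your route.

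The missing idea is that no forcing term should appear at all. By the definition of the estimated disturbance in Equation~\eqref{equ:noise}, the grid-point recurrence $x_{r+1} = Q_h x_r + h\bigl(hB\sum_i \tilde M_k^i \hat w_{r-i} + \hat w_r\bigr)$ with $Q_h = I + h(A-BK)$ holds \emph{exactly}: $\hat w_r$ is constructed precisely to swallow the Taylor remainder, which is why the DAC policy is driven by $\hat w$ rather than $w$. Since $y_t$ is defined by the same recurrence, driven by the same recorded $\hat w$'s, with the only change being that the state at index $t-Hm$ is replaced by zero, the difference telescopes exactly to $x_{t+1}-y_{t+1} = Q_h^{Hm+1} x_{t-Hm}$ (Lemma~\ref{lem:evo}), whence $\|x_t - y_t\| \le \kappa^2(1-h\gamma)^{Hm+1}\|x_{t-Hm}\| \le \kappa^2(1-h\gamma)^{Hm+1}D$ with no additive error, and $\|u_t - v_t\| \le \kappa\|x_t-y_t\|$ supplies the $(1+\kappa)$ factor. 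Note also that $y_t, v_t$ are purely discrete-time objects and Definition~\ref{def:linear_policy} bounds powers of $Q_h$, not the continuous semigroup $e^{(A-BK)s}$, so the exact grid-level recursion is the right vehicle; the continuous-time difference ODE you invoke is not even well defined for $y$.
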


The proof of this Lemma mainly use the Gronwall inequality and the induction method.
Then we analyze the discretization error of the system.
\begin{restatable}{lemma}{dis}
\label{lem:dis}
    Under Assumption \ref{asmp:bounded_cost}, Algorithm \ref{alg} attains the following bound of $R_0$:
\begin{align*}
    R_0 = \sum_{i=0}^{n-1} \int_{ih}^{(i+1)h} (c_t(x_t, u_t) - c_t(x^\ast_t, u^\ast_t)) dt - h \sum_{i=0}^{n-1} \left(c_{i}(x_{i}, u_{i}) -  c_{i}(x^*_{i}, u^*_{i})\right)
    \le (G+L)D^2 hT \,.
\end{align*}
\end{restatable}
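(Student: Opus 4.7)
The plan is to decompose $R_0$ interval by interval and reduce the bound to two ingredients: the temporal Lipschitz bound on $c_t$ and the gradient bound on $c_{ih}$ in its $(x,u)$ arguments. First, I would rewrite
\begin{align*}
R_0 = \sum_{i=0}^{n-1}\int_{ih}^{(i+1)h}\bigl[c_t(x_t,u_t) - c_{ih}(x_{ih},u_{ih})\bigr]\,dt - \sum_{i=0}^{n-1}\int_{ih}^{(i+1)h}\bigl[c_t(x^*_t,u^*_t) - c_{ih}(x^*_{ih},u^*_{ih})\bigr]\,dt,
\end{align*}
so it is enough to control the integrand on a single sub-interval $[ih,(i+1)h]$ along each of the two trajectories.

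For a single such integrand I would use the triangle inequality to split
\[ |c_t(x_t,u_t)-c_{ih}(x_{ih},u_{ih})| \le |c_t(x_t,u_t)-c_{ih}(x_t,u_t)| + |c_{ih}(x_t,u_t)-c_{ih}(x_{ih},u_{ih})|. \]
The first summand is at most $L(t-ih)D^2$ by the temporal Lipschitz estimate in Assumption~\ref{asmp:bounded_cost}. For the second summand, convexity together with the gradient bound $\|\nabla_x c_{ih}\|,\|\nabla_u c_{ih}\|\le GD$ (valid because all relevant arguments lie in the $D$-ball by Lemma~\ref{lem:dif}) yields an upper bound of $GD(\|x_t-x_{ih}\|+\|u_t-u_{ih}\|)$.

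To finish, I need the state/action drift on a single sampling interval. Lemma~\ref{lem:dif} gives $\|\dot{x}_t\|\le D$, and hence $\|x_t-x_{ih}\|\le D(t-ih)$ for both the algorithm's trajectory and the optimal comparator. For the algorithm's action, the pseudocode holds $u_t$ constant on $[ih,(i+1)h]$, so $\|u_t-u_{ih}\|=0$; for $u^*_t=-K^* x^*_t$, the bound $\|u^*_t-u^*_{ih}\|\le \kappa D(t-ih)$ follows from the same derivative estimate and is absorbed into the universal constant $D$ of Lemma~\ref{lem:dif}. Substituting and integrating gives a per-interval, per-trajectory bound of order $(G+L)D^2 (t-ih)$, and $\int_{ih}^{(i+1)h}(t-ih)\,dt = h^2/2$, so each interval contributes at most $(G+L)D^2 h^2/2$ per trajectory. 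Summing over the $n=\lceil T/h\rceil$ intervals and accounting for both trajectories produces the claimed total $R_0\le (G+L)D^2 hT$.

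The main obstacle is not analytic but bookkeeping: one must verify that all the uniform-in-$t$ bounds needed (on $\|x_t\|,\|u_t\|,\|x^*_t\|,\|u^*_t\|$ and the relevant derivatives) are available from Lemma~\ref{lem:dif}, both along the algorithm's trajectory and along the comparator's trajectory, so that the gradient bound $GD$ is legitimately applicable on the whole segment connecting $(x_{ih},u_{ih})$ to $(x_t,u_t)$. Once those uniform bounds are in hand, the argument is a routine combination of the temporal Lipschitz estimate and the mean-value inequality for $c_{ih}$.
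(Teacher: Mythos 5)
Your proposal is correct and follows essentially the same route as the paper: split each per-interval error via the triangle inequality into a temporal-Lipschitz term and a spatial term bounded by the gradient bound $GD$ together with $\|\dot{x}_t\|\le D$ from Lemma~\ref{lem:dif}, use that the algorithm's action is constant on each sampling interval, and integrate $(t-ih)$ to get $h^2/2$ per interval before summing over the $n$ intervals and both trajectories. If anything, you are slightly more explicit than the paper (which just says "a similar bound can easily be established") in noting that the comparator's action $u^*_t=-K^*x^*_t$ drifts within an interval and contributes an extra $\kappa$-dependent constant factor.
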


This lemma indicates that the discretization error is directly proportional to the sample distance $h$. In other words, increasing the number of sampling points leads to more accurate estimation of system.

Then we analysis the difference between ideal cost and actual cost.
The following lemma describes the upper bound of the error by approximating the ideal state and action:

\begin{restatable}{lemma}{app}
\label{lem:app}
    Under Assumption \ref{asmp:bounded_dynamic_matrix} and \ref{asmp:bounded_cost}, Algorithm \ref{alg} attains the following bound of $R_1$:
\begin{align*}
R_1 = \ & \sum_{i=0}^{p - 1} \left(\sum_{j=im}^{(i+1)m - 1} c_{i}(x_{i}, u_{i}) - f_i\left(\Tilde{M}_{i-H}, \ldots , \Tilde{M_i}\right)\right) 
\le nG D^2 \kappa^2(1+\kappa)(1-h\gamma)^{Hm+1} \,.
\end{align*}
\end{restatable}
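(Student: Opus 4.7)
The plan is to reduce the bound on $R_1$ to a straightforward per-step comparison between the actual cost $c_i(x_i,u_i)$ and the ideal cost $c_i(y_i,v_i)$, and then aggregate using the pointwise bound on $\|x_t - y_t\|$ and $\|u_t - v_t\|$ already established in Lemma~\ref{lem:dif}.

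First, I would unpack the definition of the ideal cost. By Definition~\ref{def:ideal}, for each block $i \in \{0,\dots,p-1\}$,
\[
f_i\bigl(\Tilde{M}_{i-H},\ldots,\Tilde{M}_i\bigr) \;=\; \sum_{t=im}^{(i+1)m-1} c_t\bigl(y_t,\,v_t\bigr),
\]
where $(y_t,v_t)$ are the ideal state and action generated under the policies $(\Tilde{M}_{i-H},\ldots,\Tilde{M}_i)$ starting from $x_{t-Hm}=0$. Consequently,
\[
R_1 \;=\; \sum_{i=0}^{p-1}\sum_{t=im}^{(i+1)m-1} \bigl(c_t(x_t,u_t)-c_t(y_t,v_t)\bigr) \;=\; \sum_{t=0}^{n-1}\bigl(c_t(x_t,u_t)-c_t(y_t,v_t)\bigr),
\]
so it suffices to control each summand.

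Next, I would apply the gradient bound in Assumption~\ref{asmp:bounded_cost}. Since $\|x_t\|,\|u_t\|,\|y_t\|,\|v_t\|\le D$ by Lemma~\ref{lem:dif}, and $\|\nabla_x c_t\|,\|\nabla_u c_t\|\le GD$ on this region, convexity (or the fundamental theorem of calculus along the segment from $(y_t,v_t)$ to $(x_t,u_t)$) yields the Lipschitz-type estimate
\[
\bigl|c_t(x_t,u_t)-c_t(y_t,v_t)\bigr| \;\le\; GD\,\bigl(\|x_t-y_t\|+\|u_t-v_t\|\bigr).
\]
Now I invoke Lemma~\ref{lem:dif} once more, which gives $\|x_t-y_t\|,\|u_t-v_t\|\le \kappa^2(1+\kappa)(1-h\gamma)^{Hm+1}D$. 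Substituting and summing over $t=0,\dots,n-1$ produces
\[
R_1 \;\le\; n\cdot GD \cdot 2\kappa^2(1+\kappa)(1-h\gamma)^{Hm+1}D \;=\; O\bigl(nGD^2\kappa^2(1+\kappa)(1-h\gamma)^{Hm+1}\bigr),
\]
matching the stated bound up to an absolute constant.

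I do not anticipate a genuine obstacle here: the heavy lifting (showing boundedness of states/actions and the geometric decay of $\|x_t-y_t\|$) has already been done in Lemma~\ref{lem:dif}. The only care needed is bookkeeping, namely verifying that the block index $i$ in Definition~\ref{def:ide-state} correctly matches the policies used in Algorithm~\ref{alg} on $[im,(i+1)m-1]$, so that the telescoping over blocks cleanly reassembles into a single sum over $t=0,\ldots,n-1$. Once that alignment is confirmed, the Lipschitz-plus-decay estimate finishes the proof in a single pass.
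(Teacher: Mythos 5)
Your proposal is correct and follows essentially the same route as the paper: unpack $f_i$ as the sum of ideal costs over the block, bound each term $|c_t(x_t,u_t)-c_t(y_t,v_t)|$ via the gradient bound of Assumption~\ref{asmp:bounded_cost} together with the distance estimates of Lemma~\ref{lem:dif}, and sum over all $n$ steps. The only (immaterial) difference is that the paper applies Lemma~\ref{lem:dif} to the sum $\|x_t-y_t\|+\|u_t-v_t\|$ and so avoids the factor of $2$ that you pick up, which is exactly the constant-level slack you already acknowledge.
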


From this lemma, it is evident that for a fixed sample distance $h$, the error diminishes as the number of sample points looked ahead $m$ increases. However, as the sampling distance $h$ decreases, the convergence rate of this term becomes slower. Therefore, it is not possible to select an arbitrarily small value for $h$ in order to minimize the discretization error $R_0$.

We need to demonstrate that the discrepancy between $x_t$ and $y_t$, as well as $u_t$ and $v_t$, is sufficiently small, given assumption \ref{asmp:bounded_dynamic_matrix}. This can be proven by analyzing the state evolution under the DAC policy.

By utilizing Assumption \ref{asmp:bounded_cost} and Lemma \ref{lem:dif}, we can deduce the following inequality:
\begin{align*}
\left|c_t\left(x_t, u_t\right)-c_t\left(y_t, v_t\right)\right| \le \ & \left|c_t\left(x_t, u_t\right)-c_t\left(y_t, u_t\right)\right| + \left|c_t\left(y_t, u_t\right)-c_t\left(y_t, v_t\right)\right| \\
\le \ & GD \|x_t - y_t\| +  GD \|u_t - v_t\| \,.
\end{align*}

Summing over all the terms and use Lemma \ref{lem:dif}, we can derive an upper bound for $R_1$.

Next, we analyze the regret of Online Convex Optimization (OCO) with a memory term. To analyze OCO with a memory term, we provide an overview of the framework established by \cite{anava2015online} in online convex optimization. The framework considers a scenario where, at each time step $t$, an online player selects a point $x_t$ from a set $\mathcal{K}\subset \mathbb{R}^d$. At each time step, a loss function $f_t: \mathcal{K}^{H+1} \rightarrow \mathbb{R}$ is revealed, and the player incurs a loss of $f_t\left(x_{t-H}, \ldots, x_t\right)$. The objective is to minimize the policy regret, which is defined as
$$
\mathrm{PolicyRegret }=\sum_{t=H}^T f_t\left(x_{t-H}, \ldots, x_t\right)-\min _{x \in \mathcal{K}} \sum_{t=H}^T f_t(x, \ldots, x) \,.
$$

In this setup, the first term corresponds to the DAC policy we choose, while the second term is used to approximate the optimal strongly stable linear policy.

\begin{restatable}{lemma}{oco}\label{lem:oco}
Under Assumption \ref{asmp:bounded_dynamic_matrix} and \ref{asmp:bounded_cost}, choosing $m = \frac{C}{h}$ and $\eta = \Theta(\frac{m}{Th})$, Algorithm \ref{alg} attains the following bound of $R_2$:
\begin{align*}
R_2 &= \sum_{i=0}^{p - 1} f_i(\Tilde{M}_{i-H}, \ldots , \Tilde{M_i}) - \min_{M \in \mathcal{M}}\sum_{i=0}^{p - 1} f_i(M, \ldots , M) \\
&\le 
    \frac{4a}{\gamma} \sqrt{\frac{GDC^2 \kappa^2(\kappa+1) W_0 \kappa_B}{\gamma}(\frac{GDC \kappa^2(\kappa+1) W_0 \kappa_B}{\gamma}+ C^2 \kappa^3 \kappa_B W_0 H^2) \frac{n}{h}} \,.
\end{align*}
\end{restatable}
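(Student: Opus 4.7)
The plan is to invoke the OCO-with-memory framework of \cite{anava2015online}, adapted to the two-level setting in which the decision variable $\tilde M_k$ is updated only once per outer window of $m$ samples. First I would set $g_k(M) := f_k(M,\ldots,M)$ and use the standard policy-regret reduction: $R_2$ is bounded by the OGD regret of $\{g_k\}_{k=0}^{p-1}$ on $\mathcal{M}$ plus a movement-cost term of the form $L_f \sum_{k \ge H}\sum_{j=1}^{H}\|\tilde M_k - \tilde M_{k-j}\|$, where $L_f$ is the coordinate-wise Lipschitz constant of $f_k$ with respect to any single one of its $H{+}1$ arguments.

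Next I would bound $L_f$ and $\|\nabla g_k\|$ by tracking how a perturbation $\Delta$ of a single $\tilde M_{k-j}$ propagates to the ideal trajectory $(y_t,v_t)$ in the current window. The closed-loop transition factorizes as $I+h(A-BK)=P L_h P^{-1}$ with $\|L_h\|\le 1-h\gamma$, so iterating and combining with the estimate $\|\hat w_\tau\|\le W_0$ and the cost-gradient bound $GD$ from Assumption \ref{asmp:bounded_cost} controls the effect of $\Delta$ by a geometric series in $(1-h\gamma)$. Summing over the $m$ inner samples of the window yields $L_f$ of order $GD\,\kappa^2(\kappa{+}1)\,W_0\kappa_B\,h m/\gamma$, and an analogous sum across the $H{+}1$ coordinates produces $\|\nabla g_k\| \le G_g$ of order $GD\,\kappa^3\kappa_B W_0\,m H$.

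Third, I would apply the projected OGD regret bound on $\mathcal{M}$. By construction $\mathcal{M} = \{\tilde M : \|\tilde M^i\|\le 2h\kappa^3(1-\gamma)^{i-1}\}$, whose Frobenius diameter is $D_\mathcal{M}=O(h\kappa^3/\gamma)$, so standard projected OGD gives $R_{\text{OGD}}\le D_\mathcal{M}^2/(2\eta) + (\eta/2)\,p\,G_g^2$. The movement cost contributes at most $\eta G_g L_f H^2 p$ since each step moves by at most $\eta G_g$ and projection is nonexpansive. With $m=C/h$ and $p=n/m$, choosing $\eta$ to balance $D_\mathcal{M}^2/\eta$ against the $\eta$-linear terms yields $R_2 = O\bigl(H\sqrt{n/h}\bigr)$, and carrying the explicit constants through the balancing reproduces the expression displayed in the statement.

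The main obstacle I anticipate is the careful bookkeeping of two nested geometric tails: the dependence of $(y_t,v_t)$ for $t\in[im,(i+1)m)$ on an earlier $\tilde M_{k-j}$ involves both an inner decay through $(I+h(A-BK))^{s}$ over $s\le m$ inner steps and an outer decay over $j$ outer windows, and both factors must be consistently tracked when converting to per-coordinate Lipschitz and gradient bounds. Equally delicate is the interaction with the $O(h)$ diameter of $\mathcal{M}$: retaining this factor is what allows $\eta$ to be chosen small enough to absorb $G_g \propto m$, and a naive analysis that treated $D_\mathcal{M}$ as $O(1)$ would lose a factor of $1/h$ in the final bound.
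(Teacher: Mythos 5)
Your overall route is the paper's route: reduce $R_2$ to the policy regret of OCO with memory (Lemma \ref{lem:OGD}), bound the diameter of $\mathcal{M}$, the coordinate-wise Lipschitz constant of $f_i$, and the gradient of $g_k(M)=f_k(M,\ldots,M)$, then balance $\eta$. The gaps are in the quantitative bookkeeping, and they are not cosmetic. First, your gradient bound $G_g = O(GD\kappa^3\kappa_B W_0\, mH)$ is asserted rather than derived, and it is too large: the sensitivity of $y_t$ (hence $v_t$) to a single block of $M$ carries a factor $h^2$ (one $h$ from the DAC action $h\sum_i M^i\hat w_{t-i}$, one from the state update) times a geometric tail $\sum_i(1-h\gamma)^{i}\le \frac{1}{h\gamma}$, i.e.\ $O(h/\gamma)$ per sample; summing over the $m=C/h$ samples of one window gives a gradient bound of order $GDC\kappa^2(\kappa+1)W_0\kappa_B/\gamma$, which is $O(1)$ in $h$ and carries no factor $H$ (this is Lemma \ref{lem:gra}). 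Second, your diameter $D_{\mathcal{M}}=O(h\kappa^3/\gamma)$, read off from the constraint displayed in Algorithm \ref{alg}, is inconsistent with the comparator the analysis must compete against: Lemma \ref{lem:opt} requires $M^i=(K-K^*)(I+h(A-BK^*))^{i-1}\in\mathcal{M}$, whose norm is of order $\kappa^3(1-h\gamma)^{i-1}$ with no factor $h$, so the feasible set the proofs actually use is $\{\|M^i\|\le a(1-h\gamma)^{i-1}\}$, whose diameter is $O\bigl(a/(h\gamma)\bigr)$ (Lemma \ref{lem:dia}) --- larger than yours by roughly $1/h^2$. Your closing remark that treating $D_{\mathcal{M}}$ as $O(1)$ would lose a factor $1/h$ has it backwards: the paper's diameter is $O(1/h)$, and the bound still goes through precisely because the gradient is $O(1)$.

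Third, the final assembly is not carried out and, as sketched, is internally inconsistent: with your own values $D_{\mathcal{M}}\sim h$, $G_g\sim H/h$, $L_f\sim 1$, $p\sim nh$, the balanced OGD-with-memory bound is of order $H\sqrt{nh}$, not the $H\sqrt{n/h}$ you claim, so the statement that carrying the constants through ``reproduces the expression displayed'' does not follow from what you wrote. The correct inputs are $D_f=\frac{2a}{h\gamma}$, $G_f=O\bigl(GDC\kappa^2(\kappa+1)W_0\kappa_B/\gamma\bigr)$, Lipschitz constant $O(C^2\kappa^3\kappa_B W_0)$ (your $L_f$ also misses one factor of $m$, though both are $O(1)$ in $h$), and $p=n/m=nh/C$; plugging these into $2D_f\sqrt{G_f(G_f+LH^2)p}$ is what yields the stated expression, and this is the computation your proposal still needs to do.
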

To analyze this term, we can transform the problem into an online convex optimization with memory and utilize existing results presented by \cite{anava2015online} for it. By applying their results, we can derive the following bound:
$$
\sum_{t=H}^T f_t\left(x_{t-H}, \ldots, x_t\right)-\min _{x \in \mathcal{K}} \sum_{t=H}^T f_t(x, \ldots, x) \leq O\left(D \sqrt{G_f\left(G_f+L H^2\right) T}\right)  \,.
$$
Taking into account the bounds on the diameter, Lipschitz constant, and the gradient, we can ultimately derive an upper bound for $R_2$.

Lastly, we aim to establish a bound on the approximation error between the optimal DAC policy and the unknown optimal linear policy.
\begin{restatable}{lemma}{opt}
    \label{lem:opt}
    Under Assumption \ref{asmp:bounded_dynamic_matrix} and \ref{asmp:bounded_cost}, Algorithm \ref{alg} attains the following bound of $R_3$: 
\begin{align*}
    R_3 &= \min_{M \in \mathcal{M}}\sum_{i=0}^{p - 1} f_i(M, ... , M)- \sum_{i=0}^{p - 1} \sum_{j=im}^{(i+1)m - 1} c_{i}(x^*_{i}, u^*_{i}) \le  3n(1-h\gamma)^{Hm}GDW_0\kappa^3a(lh \kappa_B + 1) \,.
\end{align*}
\end{restatable}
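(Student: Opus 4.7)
The plan is to exhibit a specific comparator $M^\star \in \mathcal{M}$ whose idealised DAC trajectory closely tracks the optimal linear trajectory $(x^\star_t, u^\star_t)$, and then pay only a Lipschitz cost (via Assumption~\ref{asmp:bounded_cost}) for the remaining mismatch. Because $\min_{M \in \mathcal{M}} \sum_i f_i(M,\ldots,M) \le \sum_i f_i(M^\star,\ldots,M^\star)$, any upper bound on the latter is an upper bound on $R_3$.

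For the choice of comparator I would use the standard identification of a linear state feedback as a disturbance-action policy: set
$$M^{\star,j} \;=\; h(K - K^\star)\bigl(I + h(A - B K^\star)\bigr)^{j-1}, \qquad j = 1, \ldots, Hm.$$
By Definition~\ref{def:linear_policy} applied to $K^\star$, we have $\|(I + h(A-BK^\star))^{j-1}\| \le \kappa^2 (1 - h\gamma)^{j-1}$, and $\|K - K^\star\| \le 2\kappa$, so $\|M^{\star,j}\| \le 2h\kappa^3(1-h\gamma)^{j-1}$ and $M^\star \in \mathcal{M}$. The algebraic point behind this choice is that when DAC runs with $M^\star$, the contribution of the first $Hm$ past disturbances rearranges, by a telescoping in $(I + h(A-BK^\star))^{j}$, into $-K^\star x^\star$ acting on the truncated history plus a residual that falls off the end of the buffer.

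Next I would establish the tracking estimate
$$\|y_t(M^\star,\ldots,M^\star) - x^\star_t\|,\ \|v_t(M^\star,\ldots,M^\star) - u^\star_t\| \;\le\; C\, W_0 \kappa^3 a (lh\kappa_B + 1)(1-h\gamma)^{Hm}$$
for an absolute constant $C$, by unrolling the discretised recurrence $y_{t+1} \approx (I + h(A-BK)) y_t + hB\sum_{j=1}^{Hm} M^{\star,j} \hat w_{t-j} + h\hat w_t$ and comparing it term-by-term with $x^\star_{t+1} \approx (I + h(A-BK^\star)) x^\star_t + h w_t$. By Lemma~\ref{lem:dif} all four trajectories lie in the $D$-ball, so Assumption~\ref{asmp:bounded_cost} gives $|c_t(y_t, v_t) - c_t(x^\star_t, u^\star_t)| \le GD\bigl(\|y_t - x^\star_t\| + \|v_t - u^\star_t\|\bigr)$. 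Summing over the $n$ sample points yields the claimed bound $3n(1-h\gamma)^{Hm} G D W_0 \kappa^3 a (lh\kappa_B + 1)$, with the factor $3$ absorbing the two state/action pieces and the $\hat w$-vs-$w$ correction discussed below.

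The main obstacle is precisely this tracking estimate, where three effects have to be controlled simultaneously. First, DAC feeds on the finite-difference estimate $\hat w$ from Equation~(\ref{equ:noise}) rather than the true $w_t$; by Assumption~\ref{asmp:bounded_dynamic_matrix} this mismatch is $O(h)$ per step, but it propagates through all $Hm$ summands and must be absorbed into the constant $a(lh\kappa_B + 1)$. Second, the truncation to the last $Hm$ disturbances must contribute only the geometric factor $(1-h\gamma)^{Hm}$, which is exactly where strong stability of $K^\star$ enters via $\|(I + h(A-BK^\star))^{Hm}\| \le \kappa^2 (1-h\gamma)^{Hm}$. Third, Definition~\ref{def:ide-state} resets the ideal state to zero at time $t - Hm$, while $x^\star_t$ has no such reset, so the comparison picks up an extra contribution from the $Hm$-old state; this must fold into the same geometric factor by again invoking the strong stability of $K^\star$. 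Balancing these three sources against a single clean $(1-h\gamma)^{Hm}$ decay is the heart of the argument and drives the constants appearing in the stated bound.
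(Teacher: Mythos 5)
Your proposal is correct and follows essentially the same route as the paper: the same comparator $M^{j}\propto (K-K^{*})(I+h(A-BK^{*}))^{j-1}$ (your extra factor of $h$ simply matches the class $\mathcal{M}$ as written in Algorithm~\ref{alg}, whereas the paper's Lemma on the optimal comparator drops it, reflecting the paper's own normalization of the DAC action), the same telescoping identity making the truncated transfer matrices equal $(I+h(A-BK^{*}))^{i}$, the same geometric tail bound from strong stability of $K^{*}$, and the same Lipschitz-cost step via Assumption~\ref{asmp:bounded_cost} and Lemma~\ref{lem:dif}. The only (bookkeeping) difference is that you compare the ideal trajectory $y_t(M^{\star})$ directly with $x^{*}_t$, folding the truncation at $t-Hm$ into the $(1-h\gamma)^{Hm}$ factor, while the paper first compares the actual trajectory $x_t(M)$ with $x^{*}_t$ and then invokes Lemma~\ref{lem:dif} to pass to the ideal cost.
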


The intuition behind this lemma is that the evolution of states leads to an approximation of the optimal linear policy in hindsight, where $u_t^* = -K^* x_t$ if we choose $M^* = \{M^i\}$, where $M^i = (K- K^*)(I + h(A-B K^*))^i$. Although the optimal policy $K^*$ is unknown, such an upper bound is attainable because the left-hand side represents the minimum of $M \in \mathcal{M}$.

\section{The evolution of the state}

In this section we will prove that using the DAC policy, the states and actions are uniformly bounded. The difference between ideal and actual states and the difference between ideal and actual action is very small.

We begin with expressions of the state evolution using DAC policy:

\begin{lemma}\label{lem:evo}
We have the evolution of the state and action:
\begin{align*}
    x_{t+1} = \ & Q^{l+1}_h x_{t-l} + h\sum_{i=0}^{2l} \Psi_{t, i} \hat{w}_{t-i} \,,\\
    y_{t+1} = \ & h\sum_{i=0}^{2Hm} \Psi_{t, i} \hat{w}_{t-i}\,, \\
    v_t = \ & -K y_t+h\sum_{j=1}^{Hm} M_t^{j} \hat{w}_{t-j} \,.
\end{align*}
where $\Psi_{t, i}$ represent the coefficients of $\hat{w}_{t-i}$:
    $$
\Psi_{t, i}=Q^i_h \mathbf{1}_{i \leq l}+h\sum_{j=0}^{l} Q^j_h B M_{t-j}^{i-j} \mathbf{1}_{i-j \in[1, l]} \,.
$$
\end{lemma}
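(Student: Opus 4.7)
The plan is to start from the one‑step discretized update implied by Equation~\eqref{equ:noise}, substitute the DAC policy, unroll the resulting linear recursion, and then rearrange the double sum so that the coefficient of each $\hat w_{t-i}$ matches the claimed $\Psi_{t,i}$. The key identity to exploit is that rearranging Equation~\eqref{equ:noise} gives the clean discrete recursion
\begin{equation*}
x_{t+1} \;=\; x_t + h\bigl(Ax_t + Bu_t\bigr) + h\,\hat w_t,
\end{equation*}
so substituting $u_t = -K x_t + h\sum_{i=1}^{l} M_t^{i}\hat w_{t-i}$ yields
\begin{equation*}
x_{t+1} \;=\; Q_h\, x_t \;+\; h\,\hat w_t \;+\; h^{2} B\!\sum_{i=1}^{l} M_t^{i}\,\hat w_{t-i},
\qquad Q_h := I + h(A-BK).
\end{equation*}
This is the workhorse equation; from here the argument is pure linear‑algebra bookkeeping.

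First I would iterate this recursion $l+1$ times, from step $t-l$ up to step $t+1$, which produces
\begin{equation*}
x_{t+1} \;=\; Q_h^{\,l+1} x_{t-l} \;+\; h\sum_{j=0}^{l} Q_h^{\,j}\,\hat w_{t-j} \;+\; h^{2}\sum_{j=0}^{l}\sum_{i=1}^{l} Q_h^{\,j} B\, M_{t-j}^{i}\,\hat w_{t-j-i}.
\end{equation*}
The next step is a change of variables in the double sum: set $k=i+j$, so that $k$ ranges over $[1,2l]$ and $j$ is constrained by $j\in[0,l]$ and $k-j\in[1,l]$. Collecting the coefficient of $\hat w_{t-k}$ gives exactly the second term in the definition of $\Psi_{t,i}$, while the single sum $h\sum_{j=0}^{l}Q_h^{\,j}\hat w_{t-j}$ contributes $Q_h^{\,i}\mathbf 1_{i\le l}$ for $i\in[0,l]$ and zero for $i\in[l+1,2l]$. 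Together these reproduce $\Psi_{t,i}=Q_h^{\,i}\mathbf 1_{i\le l}+h\sum_{j=0}^{l}Q_h^{\,j} B M_{t-j}^{i-j}\mathbf 1_{i-j\in[1,l]}$, establishing the formula for $x_{t+1}$.

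For the ideal state, by Definition~\ref{def:ide-state} $y_t$ is exactly what the state process would be if $x_{t-Hm}=0$ and the DAC parameters were frozen at the ideal choice over the past $Hm$ steps. I would therefore apply the same derivation verbatim with $l$ replaced by $Hm$ and $x_{t-Hm}=0$, so the $Q_h^{\,l+1}x_{t-l}$ term vanishes and only the driving‑noise contribution survives, giving $y_{t+1}=h\sum_{i=0}^{2Hm}\Psi_{t,i}\hat w_{t-i}$. The expression for $v_t$ is then immediate from the definition of the DAC policy applied at the ideal state: substituting $y_t$ for $x_t$ in the update rule already yields $v_t=-Ky_t+h\sum_{j=1}^{Hm}M_t^{\,j}\hat w_{t-j}$.

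The only non‑routine part is the index accounting when collapsing the double sum into the single sum indexed by $k$; in particular keeping the indicator constraints $j\in[0,l]$ and $k-j\in[1,l]$ straight so that the final $\Psi_{t,i}$ has exactly the stated support. I expect no analytic difficulty beyond this bookkeeping, since the recursion is linear and the DAC policy is affine in the $\hat w$'s, and the $h^{2}$ arising from the DAC's explicit $h$ factor plus the $h$ in front of $\hat w_t$ are absorbed cleanly into the common prefactor $h$ in the final formulas.
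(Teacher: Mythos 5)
Your proposal is correct and follows essentially the same route as the paper: obtain the exact one-step recursion $x_{t+1}=Q_h x_t + h\hat w_t + h^2 B\sum_{i=1}^{l} M_t^{i}\hat w_{t-i}$, unroll it $l+1$ steps, reindex the double sum to collect the coefficient of each $\hat w_{t-i}$ into $\Psi_{t,i}$, and then take $l=Hm$ with $x_{t-Hm}=0$ for $y_{t+1}$ and read off $v_t$ from the DAC definition. The only cosmetic difference is that you get the recursion directly by rearranging the definition of $\hat w_t$, whereas the paper derives it via a Taylor expansion whose second-order residue cancels against $\hat w_t - w_t = h r_t$; the two are identical.
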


\begin{proof}
Define $Q_h= I + h(A-B K)$. Using the Taylor expansion of $x_t$ and denoting $r_t$ as the second-order residue term, we have 
\begin{align*}
    x_{t+1} = x_t + h \dot{x}_t + h^2 r_t = x_t + h(Ax_t + Bu_t + w_t) + h^2 r_t \,.
\end{align*}

Then we calculate the difference between $w_i$ and $\hat{w}_i$:
\begin{align*}
    \hat{w}_t - w_t = \frac{x_{t+1} - x_t - h(Ax_t + Bu_t + w_t)}{h}  = h r_t\,.
\end{align*}

Using the definition of DAC policy and the difference between disturbance, we have
\begin{align*}
    x_{t+1} &= x_t + h\left(Ax_t + B\left(-K x_t+h\sum_{i=1}^{l} M_t^i \hat{w}_{t-i}\right) + \hat{w}_t - hr_t\right) + h^2 r_t \\
    &= (I + h(A - BK)) x_t + h\left(B h\sum_{i=1}^{l} M_t^i  \hat{w}_{t-i} + \hat{w}_t\right)  \\
    &= Q_h x_t + h\left(Bh\sum_{i=1}^{l} M_t^i  \hat{w}_{t-i} + \hat{w_t}\right)  \\
    &= Q^2_h x_{t-1} + h\left(Q_h\left(Bh\sum_{i=1}^{l} M_{t-1}^i  \hat{w}_{t-1-i} + \hat{w}_{t-1}\right)\right) +h\left(Bh\sum_{i=1}^{l} M_{t}^i  \hat{w}_{t-i} + \hat{w_t}\right)  \\
    &= Q^{l+1}_h x_{t-l} + h\sum_{i=0}^{2l} \Psi_{t, i} \hat{w}_{t-i} \,,
\end{align*}
where the last equality is by recursion and $\Psi_{t, i}$ represent the coefficients of $\hat{w}_{t-i}$.

Then we calculate the coefficients of $w_{t-i}$ and get the following result:
$$
\Psi_{t, i}=Q^i_h \mathbf{1}_{i \leq l}+h\sum_{j=0}^{l} Q^j_h B M_{t-j}^{i-j} \mathbf{1}_{i-j \in[1, l]} \,.
$$

By the ideal definition of $y_{t+1}$ and $v_t$(only consider the effect of the past $Hm$ steps while planning, assume $x_{t-Hm} = 0$), taking $l = Hm$ we have
\begin{align*}
y_{t+1} = \ & h\sum_{i=0}^{2Hm} \Psi_{t, i} \hat{w}_{t-i}, \\
v_t = \ & -K y_t+h\sum_{j=1}^{Hm} M_t^{j} \hat{w}_{t-j} \,. \tag*\qedhere
\end{align*}
\end{proof}

Then we prove the norm of the transition matrix is bounded.
\begin{lemma}\label{lem:transition}
    We have the following bound of the transition matrix:
    $$
\left\|\Psi_{t, i}\right\| \leq a(lh \kappa_B + 1) \kappa^2(1-h\gamma)^{i-1} \,.
$$
\end{lemma}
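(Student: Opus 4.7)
}
The plan is to start from the closed-form expression
\[
\Psi_{t,i} \;=\; Q_h^{\,i}\,\mathbf{1}_{i\le l}\;+\;h\sum_{j=0}^{l} Q_h^{\,j}\,B\,M_{t-j}^{\,i-j}\,\mathbf{1}_{i-j\in[1,l]}
\]
established in Lemma~\ref{lem:evo}, apply the triangle inequality, and bound each factor using the strong-stability hypothesis together with the definition of the constraint set $\mathcal{M}$ used in Algorithm~\ref{alg}. The goal is to factor out a common geometric decay $(1-h\gamma)^{i-1}$ and collect the remaining $h$, $l$ and $\kappa_B$ factors into the promised form $a(lh\kappa_B+1)\kappa^2$.

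First I would control the unperturbed term. By Definition~\ref{def:linear_policy}, for any $h\le h_0$ we can write $Q_h=PL_hP^{-1}$ with $\|L_h\|\le 1-h\gamma$ and $\|P\|,\|P^{-1}\|\le\kappa$, which gives $\|Q_h^{\,i}\|\le\kappa^2(1-h\gamma)^{i}\le\kappa^2(1-h\gamma)^{i-1}$ whenever $i\ge 1$. For the sum term, the constraint $\|\tilde M^{\,i-j}\|\le 2h\kappa^3(1-h\gamma)^{i-j-1}$ from the DAC policy class, together with $\|B\|\le\kappa_B$ and the bound on $\|Q_h^{\,j}\|$, yields
\[
\bigl\|Q_h^{\,j}BM_{t-j}^{\,i-j}\bigr\|\;\le\;\kappa^2(1-h\gamma)^{j}\cdot\kappa_B\cdot 2h\kappa^3(1-h\gamma)^{i-j-1}\;=\;2h\kappa^5\kappa_B(1-h\gamma)^{i-1},
\]
so the exponent telescopes and the $j$-dependence drops out entirely.

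Next I would count non-zero summands. The indicator $\mathbf{1}_{i-j\in[1,l]}$ restricts $j$ to an interval of length at most $l$, hence the sum contributes at most $l$ terms, each of size $2h\kappa^5\kappa_B(1-h\gamma)^{i-1}$. Combining with the unperturbed term gives
\[
\|\Psi_{t,i}\|\;\le\;\kappa^2(1-h\gamma)^{i-1}\;+\;2h^{2}l\kappa^{5}\kappa_B(1-h\gamma)^{i-1}
\;=\;\kappa^2(1-h\gamma)^{i-1}\bigl(1+2h^{2}l\kappa^{3}\kappa_B\bigr).
\]
Absorbing the $h$ and $\kappa^3$ factors into a single constant $a$ (depending only on the parameters of Assumption~\ref{asmp:bounded_dynamic_matrix} and $h_0$), the right-hand side is bounded by $a(lh\kappa_B+1)\kappa^2(1-h\gamma)^{i-1}$.

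The only delicate point is the telescoping of the exponents of $(1-h\gamma)$ across $Q_h^{\,j}$ and $M_{t-j}^{\,i-j}$: this is what prevents the sum over $j$ from generating an extra geometric series and allows a clean factor of $l$ (rather than $\sum_{j}(1-h\gamma)^{-j}$). Everything else is triangle inequality and the operator-norm bounds we have already assumed.
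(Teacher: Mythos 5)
Your proposal is correct and follows essentially the same route as the paper's proof: triangle inequality on the closed form of $\Psi_{t,i}$, the strong-stability bound $\|Q_h^{\,j}\|\le\kappa^2(1-h\gamma)^{j}$, the norm constraint on the $M$-parameters, telescoping of the exponents to $(1-h\gamma)^{i-1}$, and counting at most $l$ nonzero summands. The only difference is bookkeeping of constants: you use the algorithm's normalization of $\mathcal{M}$ (with its extra factor of $h$), which gives a slightly tighter intermediate bound that you then validly absorb into $a$ (requiring only $a\ge 1$, $h\le 1$), whereas the paper works directly with $\|M_t^{i}\|\le a(1-h\gamma)^{i-1}$.
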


\begin{proof}
By the definition of strongly stable policy, we know 
\begin{align}\label{equ:strong}
    \|Q^i_h\| = \|(P L_h P^{-1})^i \|= \|P (L_h)^i P^{-1}\| \le \|P\| \|L_h\|^i \|P^{-1}\| \le a\kappa^2 (1-h\gamma)^i \,.
\end{align}

By the definition of $\Psi_{t, i}$, we have
\begin{align*}
\left\|\Psi_{t, i}\right\|
&= \left\|Q^i_h \mathbf{1}_{i \leq l}+h\sum_{j=0}^{l} Q^j_h B M_{t-j}^{i-j} \mathbf{1}_{i-j \in[1, l]} \right\| \\
& \le \kappa^2(1-h\gamma)^i +ah\sum_{j=1}^{l} \kappa_B \kappa^2 (1-h\gamma)^j (1-h\gamma)^{i-j-1}\\
& \leq \kappa^2(1-h\gamma)^i +alh \kappa_B \kappa^2 (1-h\gamma)^{i-1} \le a(lh \kappa_B + 1) \kappa^2(1-h\gamma)^{i-1}\,,
\end{align*}
where the first inequality is due to equation \ref{equ:strong}, assumption \ref{asmp:bounded_dynamic_matrix} and the condition of $\left\|M_t^{i}\right\| \leq a(1-h\gamma)^{i-1}$.
\end{proof}

After that, we can uniformly bound the state $x_t$ and its first and second-order derivative.
\begin{lemma}\label{lem:bound}
For any $t \in [0,T]$, choosing arbitrary $h$ in the interval $[0, h_0]$ where $h_0$ is a constant only depends on the parameters in the assumption, we have $\|x_t\| \le D_1$, $\|\dot{x}_t\| \le D_2$, $\|\ddot{x}_t\| \le D_3$ and the estimatation of disturbance is bounded by $\|\hat{w}_t\| \le W_0$. Moreover, $D_1$, $D_2$, $D_3$ are only depend on the parameters in the assumption.
\end{lemma}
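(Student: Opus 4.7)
The plan is to prove all four bounds simultaneously by induction on the sample index $i = 0, 1, \ldots, n$, exploiting the strong stability of $K$ to absorb the discretization error at each sample point, and Gronwall's inequality on each length-$h$ open interval $(ih, (i+1)h)$ to transport the bound to interior times. Once the state bound is in hand, the derivative and noise bounds are essentially forced: within $(ih, (i+1)h)$ the action $u_t$ is the constant $u_{ih} = -K x_{ih} + h \sum_{j=1}^{Hm} \tilde{M}^{j} \hat{w}_{i-j}$, so $\dot{x}_t = A x_t + B u_{ih} + w_t$ is continuous and $\ddot{x}_t = A \dot{x}_t + \dot{w}_t$ is bounded by a purely algebraic combination of the $D_\ell$'s.

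First I would fix candidate constants by closing the chain $D_1 \Rightarrow D_2 \Rightarrow D_3 \Rightarrow W_0$. Using the DAC action bound $\|u_{ih}\| \le \kappa D_1 + h \cdot (2 \kappa^3 / \gamma) \cdot W_0$, where the geometric series $\sum_{j=1}^{Hm} 2h\kappa^3 (1-h\gamma)^{j-1} \le 2\kappa^3/\gamma$ comes from the constraint set $\mathcal{M}$ in Algorithm~\ref{alg}, I take $D_2 = \kappa_A D_1 + \kappa_B \|u_{ih}\| + W$, $D_3 = \kappa_A D_2 + W$, and $W_0 = W + \tfrac{h}{2} D_3$; the last identity follows from the Taylor expansion $x_{(i+1)h} = x_{ih} + h \dot{x}_{ih} + \tfrac{h^2}{2} \ddot{x}_\xi$ plugged into (\ref{equ:noise}), which yields $\hat{w}_i = w_{ih} + \tfrac{h}{2} \ddot{x}_\xi$. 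For $h \le h_0$ sufficiently small this system has a bounded solution depending only on the assumption parameters. The interior-of-interval state bound then follows from Gronwall on $[ih, (i+1)h]$: $\|x_t\| \le e^{\kappa_A h}(\|x_{ih}\| + h(\kappa_B \|u_{ih}\| + W))$, which is $(1 + O(h))\|x_{ih}\| + O(h)$ and is absorbed into the definition of $D_1$ up to a harmless constant factor.

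The main obstacle is closing the induction at the next sample point, where I must show $\|x_{(i+1)h}\| \le D_1$ itself rather than $D_1 + O(h)$. Here I would replace the crude Gronwall bound by the one-step identity obtained from Taylor expansion and the DAC formula,
\[
x_{(i+1)h} \;=\; (I + h(A - BK))\, x_{ih} \;+\; h^2 B \sum_{j=1}^{Hm} \tilde{M}^{j} \hat{w}_{i-j} \;+\; h\, w_{ih} \;+\; \tfrac{h^2}{2}\, \ddot{x}_\xi,
\]
and appeal to the strong-stability bound $\|I + h(A - BK)\| \le 1 - h\gamma$ from Definition~\ref{def:linear_policy}. This yields $\|x_{(i+1)h}\| \le (1 - h\gamma)\, \|x_{ih}\| + h W + C h^2$, where $C$ depends only on $\kappa, \kappa_B, \gamma, D_3, W_0$. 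Taking $D_1 \ge 2 W/\gamma$ and $h_0$ small enough that $C h_0 \le W$ gives $\|x_{(i+1)h}\| \le (1 - h\gamma)\, D_1 + 2 h W \le D_1$, closing the induction. The key point that makes the induction self-consistent is that the strong-stability contraction is per-step of size $h\gamma$, while the Taylor remainder is one full order of $h$ smaller, so the per-step net change is $O(h)$ and sums against the contraction into a finite geometric series independent of $T$.
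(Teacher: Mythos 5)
Your treatment of the derivatives and of the noise estimate parallels the paper's proof (Gronwall on each length-$h$ interval with $\dot u\equiv 0$, $\|\ddot x_t\|\le \kappa_A\|\dot x_t\|+W$, and $\hat w_i-w_{ih}=O(hD_3)$ from the Taylor remainder, followed by a small-$h$ fixed-point choice of the constants). The gap is in the step you yourself flag as the main one: closing the induction at the next sample point. You invoke ``the strong-stability bound $\|I+h(A-BK)\|\le 1-h\gamma$ from Definition~\ref{def:linear_policy}'', but the definition does not assert this. It only says $I+h(A-BK)=PL_hP^{-1}$ with $\|L_h\|\le 1-h\gamma$ and $\|P\|,\|P^{-1}\|\le\kappa$, so the best one-step operator-norm bound available is $\|I+h(A-BK)\|\le\kappa^2(1-h\gamma)$, which is in general larger than $1$ (and this is exactly how the paper uses the definition, e.g.\ $\|Q_h^i\|\le\kappa^2(1-h\gamma)^i$ in the proof of Lemma~\ref{lem:transition}). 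With only $\kappa^2(1-h\gamma)$ per step, your recursion $\|x_{(i+1)h}\|\le(1-h\gamma)\|x_{ih}\|+hW+Ch^2$ is not justified, the per-step map is not a contraction, and the geometric-series argument that makes $D_1$ independent of $T$ collapses.

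There are two standard repairs, and the paper takes the second. Either run your one-step recursion in the transformed coordinates $z_{ih}=P^{-1}x_{ih}$, where $L_h$ genuinely contracts at rate $1-h\gamma$ and you pay a factor $\kappa$ on the forcing terms and a factor $\kappa$ when returning to $x$; or do what the paper's Lemma~\ref{lem:evo} does: unroll the recursion all the way back to $x_0=0$, writing $x_{t+1}=h\sum_{i=0}^{t}\Psi_{t,i}\hat w_{t-i}$, and bound each whole power $Q_h^i$ by $\kappa^2(1-h\gamma)^i$ (the similarity transform is applied once per power, not once per step), so that Lemma~\ref{lem:transition} gives $\|\Psi_{t,i}\|\le a(lh\kappa_B+1)\kappa^2(1-h\gamma)^{i-1}$ and the geometric series yields a $T$-independent bound of order $\kappa^2(W+hD_3)/\gamma$. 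Your Taylor one-step identity itself is fine (it is how the paper derives the evolution formula), but as written the contraction claim is unsupported by the definition, so the induction does not close without one of these fixes.
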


\begin{proof}
    We prove this lemma by induction. When $t = 0$, it is clear that $x_0$ satisfies this condition. Suppose $x_t \le D_1$, $\dot{x}_t \le D_2$, $\ddot{x}_t \le D_3$, $\hat{w}_t \le W_0$ for any $t \le t_0$, where $t_0 = kh$ is the $k$-th discretization point. Then for $t \in [t_0, t_0 + h]$, we first prove that $\dot{x}_t \le D_2$, $\ddot{x}_t \le D_3$.

    By Assumption \ref{asmp:bounded_dynamic_matrix} and our definition of $u_t$, we know that for any $t \in [t_0, t_0 + h]$. Thus, we have
    \begin{align*}
        \|\dot{x}_t\| 
        &= \| Ax_t + Bu_t + w_t \| \\
        & = \| Ax_t + B(-Kx_{t_0} + h \sum_{i=1}^l M^i_{k} \hat{w}_{k-i} ) + w_t \| \\
        & \le \kappa_A \|x_t\| + \kappa_B \kappa\|x_{t_0}\| + h \sum_{i=1}^l (1-h\gamma)^{i-1} W_0 + W \\
        & \le \kappa_A \|x_t\| + \kappa_B \kappa D_1 + \frac{W_0}{\gamma} + W\,,
    \end{align*}
    where the first inequality is by the induction hypothesis $\hat{w}_t \le W_0$ for any $t \le t_0$ and $M^i_k \le (1-h \gamma)^{i-1}$, the second inequality is by the induction hypothesis $x_t \le D_1$ for any $t \le t_0$.

    For any $t \in [t_0, t_0 + h]$, because we choose the fixed policy $u_t \equiv u_{t_0}$, so we have $\dot{u}_t = 0$ and 
    \begin{align*}
        \| \ddot{x}_t \| &= \| A \dot{x}_t + B \dot{u}_t + \dot{w}_t \| =  \| A \dot{x}_t + \dot{w}_t \| \le \kappa_A \|\dot{x}_t\| + W \,.
    \end{align*}

    By the Newton-Leibniz formula, we have
    for any $\zeta \in [0,h]$, 
    \begin{align*}
        \dot{x}_{t_0 + \zeta} - \dot{x}_{t_0} = \int_{0}^{\zeta} \ddot{x}_{t_0 + \xi} d_{\xi} \,.
    \end{align*}

    Then we have 
    \begin{align*}
        \|\dot{x}_{t_0 + \zeta}\| &\le \|\dot{x}_{t_0}\| + \int_{0}^{\zeta} \|\ddot{x}_{t_0 + \xi}\| d_{\xi} \\
        & \le \|\dot{x}_{t_0}\| + \int_{0}^{\zeta} (\kappa_A\|\dot{x}_{t_0 + \xi}\| + W) d_{\xi} \\
        & = \|\dot{x}_{t_0}\|+ W\zeta + \kappa_A \int_{0}^{\zeta} \|\dot{x}_{t_0 + \xi}\| d_{\xi} \,.
    \end{align*}

    By Gronwall inequality, we have 
     \begin{align*}
        \|\dot{x}_{t_0 + \zeta}\| &\le \|\dot{x}_{t_0}\|+ W\zeta +  \int_{0}^{\zeta} (\|\dot{x}_{t_0}\|+W \xi)\exp(\kappa_A(\zeta - \xi)) d_{\xi} \,.
    \end{align*}

   Then we have 
    \begin{align*}
    \|\dot{x}_{t_0 + \zeta}\| &\le \|\dot{x}_{t_0}\|+ W\zeta +  \int_{0}^{\zeta} (\|\dot{x}_{t_0}\|+W \zeta)\exp(\kappa_A\zeta)) d_{\xi} \\
    &= (\|\dot{x}_{t_0}\|+ W\zeta)(1 + \zeta \exp(\kappa_A\zeta)) \\
    & \le \left(\kappa_A \|x_{t_0}\| + \kappa_B \kappa D_1 + \frac{W_0}{\gamma} + W + Wh \right)(1 + h\exp(\kappa_Ah)) \\
    & \le  \left((\kappa_A+ \kappa_B \kappa) D_1 + \frac{W_0}{\gamma} + W + Wh \right)(1 + h\exp(\kappa_Ah)) \\
    &\le \left((\kappa_A+ \kappa_B \kappa) D_1 + \frac{W_0}{\gamma} + 2W \right) (1+\exp(\kappa_A)) \,,
    \end{align*} 
    where the first inequality is by the relation $\xi \le \zeta$, the second inequality is by the relation $\zeta \le h$ and the bounding property of first-order derivative, the third inequality is by the induction hypothesis and the last inequality is due to $h \le 1$.

    By the relation $\|\ddot{x}_t \|\le \kappa_A\|\dot{x}_t\| + W$, we have 
    \begin{align*}
        \|\ddot{x}_{t_0 + \zeta}\| &\le \kappa_A D_2 + W \,.
    \end{align*}
    So we choose $D_3 = \kappa_A D_2 + W$.
    By the equation, we have 

    \begin{align*}
        \|\hat{w}_t - w_t\| &= \left\|\frac{x_{t+1} - x_t - h(Ax_t + Bu_t + w_t)}{h}\right\|  \\
        &= \left\|\frac{x_{t+1} - x_t - h\dot{x}_t}{h}\right\|  = \left\|\frac{\int_{0}^{h} (\dot{x}_{t+\xi} - \dot{x}_t)d\xi }{h}\right\|  = \left\|\frac{\int_{0}^{h} \int_{0}^{\xi}\ddot{x}_{t+\zeta} d\zeta d\xi}{h}\right\| 
        \\
        &\le \frac{\int_{0}^{h} \int_{0}^{\xi}\|\ddot{x}_{t+\zeta}\| d\zeta d\xi}{h} \\
        & \le hD_3 \,,
        \\
    \end{align*}
   where in the second line we use the Newton-Leibniz formula, the inequality is by the conclusion $\|\ddot{x}_t \|\le D_3$ which we have proved before.
    By Assumption \ref{asmp:bounded_dynamic_matrix}, we have
    $$
\|\hat{w}_t \|  \le W +  hD_3  \,.
$$ 

Choosing $D_3 = \kappa_A D_2 + W$, $W_0 = W + h D_3 = W + h(\kappa_A D_2 + W)$, we get

 \begin{align*}
    \|\dot{x}_{t_0 + \zeta}\| &\le  ((\kappa_A+ \kappa_B \kappa) D_1 + \frac{W_0}{\gamma} + 2W) (1+\exp(\kappa_A)) \\
    & \le  ((\kappa_A+ \kappa_B \kappa) D_1 + \frac{W + h(\kappa_A D_2 + W)}{\gamma} + 2W) (1+\exp(\kappa_A)) \\
    & \le D_2 \left(\frac{h\kappa_A}{\gamma} (1+\exp(\kappa_A))\right) + \left((\kappa_A+ \kappa_B \kappa) D_1 + \frac{(1+h+2\gamma)W}{\gamma}\right)(1+\exp(\kappa_A))) \,.
\end{align*} 

Using the notation 
\begin{align*}
    \beta_1 & = \frac{h\kappa_A}{\gamma} (1+\exp(\kappa_A)) \,, \\
    \beta_2 &= \left((\kappa_A+ \kappa_B \kappa) D_1 + \frac{2(1+\gamma)W}{\gamma}\right)(1+\exp(\kappa_A)) \,.
\end{align*}

When $h < \frac{\gamma}{2\kappa_A(1 + \exp(\kappa_A))}$, we have $\beta_1 < \frac{1}{2}$. Taking $D_2 = 2 \beta_2$ we get 
 \begin{align*}
    \|\dot{x}_{t_0 + \zeta}\| &\le \beta_1 D_2 + \beta_2 \le D_2 \,.
\end{align*} 

So we have proved that for any $t \in [t_0, t_0 + h]$, $\|\dot{x}_t\| \le D_2$, $\|\ddot{x}_t\| \le D_3$, $\|\hat{w}_t\| \le W_0$.

Then we choose suitable $D_1$ and prove that for any $t \in [t_0, t_0 + h]$, $\|x_t\| \le D_1$.

    Using Lemma \ref{lem:evo}, we have 
    \begin{align*}
    x_{t+1} &= h\sum_{i=0}^{t} \Psi_{t, i} \hat{w}_{t-i} \,. 
\end{align*}

By the induction hypothesis of bounded state and estimation noise in $[0, t_0]$ together with Lemma \ref{lem:transition}, we have 
\begin{align*}
    \|x_{t+1}\| &\le  h \sum_{i=0}^{t} (lh \kappa_B + 1) \kappa^2(1-h\gamma)^i ( W + h D_3) \\
    &\le \frac{(lh \kappa_B + 1) \kappa^2( W + h D_3)}{\gamma} \,.
\end{align*}

Then, by the Taylor expansion and the inequality $\dot{x}_t \le D_2$ , we have for any $\zeta \in [0,h]$,
\begin{align*}
   \|x_{t+1} - x_{t+\zeta}\| & = \|\int^h_\zeta \dot{x}_{t+\xi} d\xi \|  \le (h-\zeta) D_2 \le hD_2 \,.
\end{align*}

Therefore we have 
\begin{align*}
   \| x_{t+\zeta}\| & \le \|x_{t+1}\|  +  hD_2  \le \frac{(lh \kappa_B + 1) \kappa^2( W + h D_3)}{\gamma} + hD_2 \\
&=  \frac{(lh \kappa_B + 1) \kappa^2 W(1+h)}{\gamma} + hD_2 \left(\frac{(lh \kappa_B + 1) \kappa^2 \kappa_A}{\gamma}+1\right) \\
&\le  \frac{(l \kappa_B + 1) 2\kappa^2 W}{\gamma} + hD_2 \left(\frac{(l \kappa_B + 1) \kappa^2 \kappa_A}{\gamma}+1\right) \,.
\end{align*}

In the last inequality we use $h \le 1$.

By the relation $D_2 = \beta_2 / (1-\beta_1)$ and $\beta_1 \le \frac{1}{2}$, we know that 
$$D_2 \le 2 \left((\kappa_A+ \kappa_B \kappa) D_1 + \frac{2(1+\gamma)W}{\gamma}\right)(1+\exp(\kappa_A)).$$

Using the notation 
\begin{align*}
    \gamma_1 & =  2 h(\kappa_A+ \kappa_B \kappa)(1+\exp(\kappa_A)) \,, \\
    \gamma_2 &=  \frac{(l \kappa_B + 1) 2\kappa^2 W}{\gamma} + 4\frac{(1+\gamma)W}{\gamma}(1+\exp(\kappa_A))\left(\frac{(l \kappa_B + 1) \kappa^2 \kappa_A}{\gamma}+1\right) \,.
\end{align*}

We have $\| x_{t+\zeta}\|  \le
\gamma_1 D_1 +\gamma_2$.

From the equation of $\gamma_1$ we know 
that when
$h \le \frac{1}{4(\kappa_A+ \kappa_B \kappa)(1+\exp(\kappa_A))}$
we have $\gamma_1 \le \frac{1}{2}$. Then we choose $D_1 = 2\gamma_2$, we finally get 
\begin{align*}
   \| x_{t+\zeta}\| & \le
\gamma_1 D_1 +\gamma_2 \le D_1\,.
\end{align*}

Finally, set 
$$ h_0 = \min\left\{1, \frac{\gamma}{\kappa_A(1+\exp(\kappa_A))}, \frac{1}{4(\kappa_A+ \kappa_B \kappa)(1+\exp(\kappa_A))}\right\} \,,$$

By the relationship $D_1 = 2\gamma_2$, $D_2 = 2\beta_2$, $D_3 = \kappa_A D_2 + W$, $W_0 = W + hD_3$,  

we can verify the induction hypothesis. Moreover, we know that $D_1$, $D_2$, $D_3$ are not depend on $h$.
Therefore we have proved the claim. 

\end{proof}

The last step is then to bound the action and the approximation errors of states and actions.

\dif*

\begin{proof}
By Lemma \ref{lem:transition}, we have $$
\left\|\Psi_{t, i}\right\| \leq a(lh \kappa_B + 1) \kappa^2(1-h\gamma)^{i-1} \,.
$$

By Lemma \ref{lem:bound} we know that for any $h$ in $[0, h_0]$, where 
$$ h_0 = \min\left\{1, \frac{\gamma}{\kappa_A(1+\exp(\kappa_A))}, \frac{1}{4(\kappa_A+ \kappa_B \kappa)(1+\exp(\kappa_A))}\right\} \,,$$

we have
$\|x_t\| \le D_1$.

By Lemma \ref{lem:evo}, Lemma \ref{lem:transition} and Lemma \ref{lem:bound}, we have 
\begin{align*}
    \|y_{t+1} \| &=  \|h\sum_{i=0}^{2Hm} \Psi_{t, i} \hat{w}_{t-i} \| \\
    &\le h W_0 \sum_{i=0}^{2Hm} a(lh \kappa_B + 1) \kappa^2 (1-h\gamma)^{i-1} \\
    &\le \frac{ aW_0(lh \kappa_B + 1) \kappa^2}{\gamma} = \Tilde{D}_1\,. 
\end{align*}

Via the definition of $x_t, y_t$, we have
$$
\left\|x_t-y_t\right\| \leq \kappa^2(1-h\gamma)^{Hm+1}\left\|x_{t-Hm}\right\| \leq \kappa^2(1-h\gamma)^{Hm+1} D_1\,.
$$

For the actions 
\begin{align*}
    u_t= \ & -K x_t+h\sum_{i=1}^{Hm} M_t^i \hat{w}_{t-i} \,, \\
    v_t= \ & -K y_t+h\sum_{i=1}^{Hm} M_t^i \hat{w}_{t-i} \,,
\end{align*}
we can derive the bound
\begin{align*}
    &\left\|u_t\right\| \leq \  \left\|K x_t\right\|+h\sum_{i=1}^{Hm}\left\|M_t^{i} \hat{w}_{t-i}\right\| \leq \kappa\left\|x_t\right\|+W_0 h \sum_{i=1}^{Hm} a(1-h\gamma)^{i-1} \leq \kappa D_1 + \frac{aW_0}{\gamma} \,, \\
    &\left\|v_t\right\| \leq \  \left\|K y_t\right\|+h\sum_{i=1}^{Hm}\left\|M_t^{i} \hat{w}_{t-i}\right\| \leq \kappa\left\|y_t\right\|+W_0 h \sum_{i=1}^{Hm} a(1-h\gamma)^{i-1} \leq \kappa \Tilde{D}_1 + \frac{aW_0}{\gamma} \,, \\
    &\left\|u_t-v_t\right\| \leq \  \|K\|\left\|x_t-y_t\right\|  \leq \kappa^3(1-h\gamma)^{Hm+1} D_1\,.
\end{align*}

By Lemma~\ref{lem:bound}, taking $D = \max\{D_1, D_2, \Tilde{D}_1, \kappa D_1 + \frac{W_0}{\gamma}, \kappa \Tilde{D}_1 + \frac{W_0}{\gamma}\}$, we get the following inequality: $\|x_t\|, \|y_t\|, \|u_t\|, \|v_t\| \le D$, $\|\dot{x}_t\| \le D$.

We also have $$\|x_t - y_t\|+ \|u_t - v_t\| \le \kappa^2(1-h\gamma)^{Hm+1} D_1 + \kappa^3(1-h\gamma)^{Hm+1} D_1 \le \kappa^2(1+\kappa)(1-h\gamma)^{Hm+1} D\,.$$

In particular, the optimal policy can be recognized as taking the DAC policy with all the $M_t$ equal to 0 and the fixed strongly stable policy $K = K^*$. So we also have $\|x^*_t\|, \|u^*_t\|\le D$.

\end{proof}

Now we have finished the analysis of evolution of the states. It will be helpful to prove the key lemmas in this paper.

\section{Proof of Lemma \ref{lem:dis}}
In this section we will prove the following lemma:
\dis*

\begin{proof}

By Assumption \ref{asmp:bounded_cost} and Lemma \ref{lem:dif}, since we use the unchanged policy $u_t$ in the interval $t \in [ih,(i+1)h]$, we have 
\begin{align*}
    |c_t(x_t, u_t) - c_{ih}(x_{ih}, u_{ih})| &\le |c_t(x_t,u_t) - c_t(x_{ih}, u_{ih})| + |c_t(x_{ih},u_{ih})-c_{ih}(x_{ih},u_{ih})| \\
&\le \max_{x}\left\|\nabla_x c_t(x, u)\right\|\|x_t-x_{ih}\| + L(t-ih)D^2 \\
&\le GD\|\int^t_{ih}\dot{x}_s ds\| + L(t-ih)D^2 \\
&\le (G+L)D^2(t-ih) \,.
\end{align*}

Therefore we have
\begin{align*}
    |&\sum_{i=0}^{n-1} \int_{ih}^{(i+1)h} c_t(x_t, u_t) dt 
    - h \sum_{i=0}^{n-1} c_{i}(x_{i}, u_{i})| \\
    =& |\sum_{i=0}^{n-1} \int_{ih}^{(i+1)h} (c_t(x_t, u_t)-  c_{ih}(x_{ih}, u_{ih}))dt |\\
    \le & (G+L)D^2 \sum_{i=0}^{n-1} \int_{ih}^{(i+1)h} (t-ih) dt =\frac{1}{2} (G+L)D^2nh^2 = \frac{1}{2}(G+L)D^2hT \,.
\end{align*}

A similar bound can easily be established by lemma \ref{lem:dif} about the optimal state and policy:
\begin{align*}
   |\sum_{i=0}^{n-1} \int_{ih}^{(i+1)h} c_t(x^\ast_t, u^\ast_t) dt - \sum_{i=0}^{n-1} c_{i}(x^*_{i}, u^*_{i})| \le \frac{1}{2}(G+L)D^2hT \,.
\end{align*}

Taking sum of the two terms we get $R_0 \le (G+L)D^2hT$.

\end{proof}

\section{Proof of Lemma \ref{lem:app}}

In this section we will prove the following lemma:
\app*

\begin{proof}
    
Using Lemma \ref{lem:dif} and Assumption \ref{asmp:bounded_cost}, have the approximation error between ideal cost and actual cost bounded as,
\begin{align*}
\left|c_t\left(x_t, u_t\right)-c_t\left(y_t, v_t\right)\right| & \le \left|c_t\left(x_t, u_t\right)-c_t\left(y_t, u_t\right)\right| + \left|c_t\left(y_t, u_t\right)-c_t\left(y_t, v_t\right)\right| \\
 & \le G D \|x_t - y_t\| + G D \|u_t - v_t\| \\
 &\le G D^2 \kappa^2(1+\kappa)(1-h\gamma)^{Hm+1} \,,
\end{align*}
where the first inequality is by triangle inequality, the second inequality is by Assumption \ref{asmp:bounded_cost}, Lemma \ref{lem:dif}, and the third inequality is by Lemma \ref{lem:dif}.

With this, we have 
\begin{align*}
    R_1 &= \sum_{i=0}^{p - 1} \left(\sum_{j=im}^{(i+1)m - 1} c_{i}(x_{i}, u_{i}) - f_i(\Tilde{M}_{i-H}, ... , \Tilde{M_i})\right)  \\
    &= \sum_{i=0}^{p - 1} \left(\sum_{j=im}^{(i+1)m - 1} c_{i}(x_{i}, u_{i}) - \sum_{j=im}^{(i+1)m - 1} c_{i}(y_{i}, v_{i})\right) \\ 
    & \le \sum_{i=0}^{p - 1} \sum_{j=im}^{(i+1)m - 1}G D^2 \kappa^2(1+\kappa)(1-h\gamma)^{Hm+1}    \le nG D^2 \kappa^2(1+\kappa)(1-h\gamma)^{Hm+1}   \,.
\end{align*}
\end{proof}

\section{Proof of Lemma \ref{lem:oco}}

Before we start the proof of Lemma \ref{lem:oco}, we first present an overview of the online convex optimization (OCO) with memory framework. Consider the setting where, for every $t$, an online player chooses some point $x_t \in$ $\mathcal{K} \subset \mathbb{R}^d$, a loss function $f_t: \mathcal{K}^{H+1} \mapsto \mathbb{R}$ is revealed, and the learner suffers a loss of $f_t\left(x_{t-H}, \ldots, x_t\right)$. We assume a certain coordinate-wise Lipschitz regularity on $f_t$ of the form such that, for any $j \in\{1, \ldots, H\}$, for any $x_1, \ldots, x_H, \Tilde{x}_j \in \mathcal{K}$
$$
\left|f_t\left(x_1, \ldots, x_j, \ldots, x_H\right)-f_t\left(x_1, \ldots, \Tilde{x}_j, \ldots, x_H\right)\right| \leq L\left\|x_j-\Tilde{x}_j\right\| \,.
$$
In addition, we define $\Tilde{f}_t(x)=f_t(x, \ldots, x)$, and we let
$$
G_f=\sup_{t \in\{1, \ldots, T\}, x \in \mathcal{K}}\left\|\nabla \Tilde{f}_t(x)\right\|, \quad D_f=\sup _{x, y \in \mathcal{K}}\|x-y\| \,.
$$

The resulting goal is to minimize the policy regret, which is defined as
$$
\mathrm{Regret}=\sum_{t=H}^T f_t\left(x_{t-H}, \ldots, x_t\right)-\min _{x \in \mathcal{K}} \sum_{t=H}^T f_t(x, \ldots, x) \,.
$$

\begin{algorithm}
    \caption{Online Gradient Descent with Memory (OGD-M)} \label{alg:oco}
    \begin{algorithmic}
    \STATE Input: Step size  $\eta$, functions $\left\{f_t\right\}_{t=m}^T$.
    \STATE Initialize $x_0, \ldots, x_{H-1} \in \mathcal{K}$  arbitrarily.
        \FOR{$t = H, \ldots, T$}
            \STATE Play $x_t$, suffer loss $ f_t\left(x_{t-H}, \ldots, x_t\right)$.
            \STATE  Set $x_{t+1}=\Pi_{\mathcal{K}}\left(x_t-\eta \nabla \Tilde{f}_t(x)\right)$.
        \ENDFOR 
    \end{algorithmic}
\end{algorithm}

To minimize this regret, a commonly used algorithm is the Online Gradient descent. By running the Algorithm \ref{alg:oco}, we may bound the policy regret by the following lemma:
\begin{lemma}\label{lem:OGD}

    Let $\left\{f_t\right\}_{t=1}^T$ be Lipschitz continuous loss functions with memory such that $\Tilde{f}_t$ are convex. Then by runnning algorithm \ref{alg:oco} itgenerates a sequence $\left\{x_t\right\}_{t=1}^T$ such that
$$
\sum_{t=H}^T f_t\left(x_{t-H}, \ldots, x_t\right)-\min _{x \in \mathcal{K}} \sum_{t=H}^T f_t(x, \ldots, x) \leq \frac{D^2_f}{\eta}+T G_f^2 \eta+L H^2 \eta G_f T \,.
$$
Furthermore, setting $\eta=\frac{D_f}{\sqrt{G_f\left(G_f+L H^2\right) T}}$ implies that
$$
\mathrm{ PolicyRegret } \leq 2D_f \sqrt{G_f\left(G_f+L H^2\right) T} \,.
$$
\end{lemma}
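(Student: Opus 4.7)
The plan is to follow the standard blueprint for analyzing online gradient descent with memory by decomposing the policy regret into a ``memoryless'' OGD regret plus a ``movement cost'' term arising from the mismatch between $f_t(x_{t-H},\dots,x_t)$ and $\tilde f_t(x_t)=f_t(x_t,\dots,x_t)$. Concretely, I write
\begin{align*}
\sum_{t=H}^T f_t(x_{t-H},\dots,x_t) - \min_{x\in\mathcal K}\sum_{t=H}^T \tilde f_t(x)
= \underbrace{\sum_{t=H}^T \bigl(f_t(x_{t-H},\dots,x_t)-\tilde f_t(x_t)\bigr)}_{\text{movement cost}}
+ \underbrace{\sum_{t=H}^T \tilde f_t(x_t)-\min_{x}\sum_{t=H}^T \tilde f_t(x)}_{\text{standard OGD regret}}.
\end{align*}

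For the standard OGD regret, since $\tilde f_t$ is convex and the update $x_{t+1}=\Pi_{\mathcal K}(x_t-\eta\nabla\tilde f_t(x_t))$ is exactly projected gradient descent on $\tilde f_t$, I invoke the textbook Zinkevich bound: convexity gives $\tilde f_t(x_t)-\tilde f_t(x)\le \langle\nabla\tilde f_t(x_t),x_t-x\rangle$, and combining with the non-expansiveness of the projection plus a telescoping argument on $\|x_t-x\|^2$ yields $\sum_t(\tilde f_t(x_t)-\tilde f_t(x))\le \tfrac{\|x_0-x\|^2}{2\eta}+\tfrac{\eta}{2}\sum_t\|\nabla\tilde f_t(x_t)\|^2\le \tfrac{D_f^2}{\eta}+\eta G_f^2 T$ after using the definitions of $D_f$ and $G_f$ (the factors of $1/2$ are absorbed to match the stated form).

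For the movement cost, the key observation is that consecutive iterates move slowly: $\|x_{t+1}-x_t\|=\|\Pi_{\mathcal K}(x_t-\eta\nabla\tilde f_t(x_t))-\Pi_{\mathcal K}(x_t)\|\le \eta\|\nabla\tilde f_t(x_t)\|\le \eta G_f$ by non-expansiveness of the projection. Telescoping gives $\|x_t-x_{t-j}\|\le j\eta G_f$ for $j\le H$. Applying the coordinate-wise Lipschitz assumption $H$ times along the chain from $(x_{t-H},\dots,x_{t-1},x_t)$ to $(x_t,\dots,x_t)$, I obtain
\begin{align*}
\bigl|f_t(x_{t-H},\dots,x_t)-\tilde f_t(x_t)\bigr|
\le L\sum_{j=1}^{H}\|x_{t-j}-x_t\|
\le L\sum_{j=1}^{H} j\eta G_f
\le L H^2 \eta G_f.
\end{align*}
Summing over $t=H,\dots,T$ yields a movement cost of at most $L H^2 \eta G_f T$. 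Adding the two pieces produces the claimed bound $\tfrac{D_f^2}{\eta}+\eta G_f^2 T+ L H^2 \eta G_f T$, and then balancing the $1/\eta$ term against the two $\eta$ terms with $\eta=D_f/\sqrt{G_f(G_f+LH^2)T}$ gives the $2 D_f\sqrt{G_f(G_f+LH^2)T}$ policy regret.

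I do not anticipate a serious obstacle, since both ingredients are classical; the only step that requires a little care is making sure the iterate-movement bound $\|x_t-x_{t-j}\|\le j\eta G_f$ is applied correctly through the chain of $H$ single-coordinate swaps so that the Lipschitz constant picks up the right $\sum_{j=1}^H j=O(H^2)$ factor rather than a mis-scaled $O(H)$ or $O(H^3)$ factor. Once this accounting is done correctly, the tuning of $\eta$ is a routine AM-GM step.
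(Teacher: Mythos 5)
Your proposal is correct and follows essentially the same route as the paper: the same decomposition into standard OGD regret plus a movement-cost term, the same Zinkevich-style bound for the former, and the same per-step iterate-movement bound $\|x_{t+1}-x_t\|\le \eta G_f$ combined with the coordinate-wise Lipschitz property to get the $LH^2\eta G_f T$ term. The only cosmetic difference is that you telescope to $\|x_t-x_{t-j}\|\le j\eta G_f$ before summing, while the paper keeps the double sum $\sum_{j=1}^H\sum_{l=1}^j\eta\|\nabla\tilde f_{t-l}(x_{t-l})\|$ explicit; these are the same computation.
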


\begin{proof}
    By the standard OGD analysis \cite{DBLP:journals/corr/abs-1909-05207}, we know that
$$
\sum_{t=H}^T \Tilde{f}_t\left(x_t\right)-\min _{x \in \mathcal{K}} \sum_{t=H}^T \Tilde{f}_t(x) \leq \frac{D^2_f}{\eta}+T G^2 \eta .
$$
In addition, we know by the Lipschitz property, for any $t \geq H$, we have
$$
\begin{aligned}
\left|f_t\left(x_{t-H}, \ldots, x_t\right)-f_t\left(x_t, \ldots, x_t\right)\right| & \leq L \sum_{j=1}^H\left\|x_t-x_{t-j}\right\| \leq L \sum_{j=1}^H \sum_{l=1}^j\left\|x_{t-l+1}-x_{t-l}\right\| \\
& \leq L \sum_{j=1}^H \sum_{l=1}^j \eta\left\|\nabla \Tilde{f}_{t-l}\left(x_{t-l}\right)\right\| \leq L H^2 \eta G,
\end{aligned}
$$
and so we have that
$$
\left|\sum_{t=H}^T f_t\left(x_{t-H}, \ldots, x_t\right)-\sum_{t=H}^T f_t\left(x_t, \ldots, x_t\right)\right| \leq T L H^2 \eta G .
$$
It follows that
$$
\sum_{t=H}^T f_t\left(x_{t-H}, \ldots, x_t\right)-\min _{x \in \mathcal{K}} \sum_{t=H}^T f_t(x, \ldots, x) \leq \frac{D^2_f}{\eta}+T G_f^2 \eta+L H^2 \eta G_f T \,.
$$
\end{proof}

In this setup, the first term corresponds to the DAC policy we make, and the second term is used to approximate the optimal strongly stable linear policy. It is worth noting that the cost of OCO with memory depends on the update frequency $H$. Therefore, we propose a two-level online controller. The higher-level controller updates the policy with accumulated feedback at a low frequency to reduce the regret, whereas a lower-level controller provides high-frequency updates of the DAC policy to reduce the discretization error. In the following part, we define the update distance of the DAC policy as $l = Hm$, where $m$ is the ratio of frequency between the DAC policy update and OCO memory policy update.
Formally, we update the value of $M_t$ once every $m$ transitions, where $g_t$ represents a loss function.
$$M_{t+1} = \left\{
		\begin{aligned}
		& \Pi_{\mathcal{M}}\left(M_t-\eta \nabla g_t(M)\right) & &{\text{if }t \% m == 0} \\
		&M_t & & {\text{otherwise}} \,.
		\end{aligned}
		\right.$$

From now on, we denote $\Tilde{M}_t = M_{tm}$ for the convenience to remove the duplicate elements. By the definition of ideal cost, we know that it is a well-defined definition.

By Lemma \ref{lem:evo} we know that 
\begin{align*}
    & y_{t+1}  =h\sum_{i=0}^{2Hm} \Psi_{t, i} \hat{w}_{t-i}, \\
 & v_t  =-K y_t+h\sum_{j=1}^{Hm} M_t^{j} \hat{w}_{t-j} \,,
\end{align*}
where
$$
\Psi_{t, i}=Q^i_h \mathbf{1}_{i \leq l}+h\sum_{j=0}^{l} Q^j_h B M_{t-j}^{i-j} \mathbf{1}_{i-j \in[1, l]} \,.
$$

So we know that $y_t$ and $y_t$ are linear combination of $M_t$, therefore $$
f_i\left(\Tilde{M}_{i-H}, \ldots, \Tilde{M}_i\right)=\sum_{t = im}^{(i+1)m - 1}c_t\left(y_t\left(\Tilde{M}_{i-H}, \ldots, \Tilde{M}_{i}\right), v_t\left(\Tilde{M}_{i-H}, \ldots, \Tilde{M}_i\right)\right) .
$$ 
is convex in $M_t$. So we can use the OCO with memory structure to solve this problem. 

By Lemma \ref{lem:bound} we know that $y_t$ and $v_t$ are bounded by $D$. Then we need to calculate the diameter, Lipchitz constant, and gradient bound of this function $f_i$. In the following, we choose the DAC policy parameter $l = Hm$.

\begin{lemma}\label{lem:dia}
   (Bounding the diameter) 
We have $$D_f = \sup_{M_i, M_j \in \mathcal{M}} \|M_i - M_j\| \le \frac{2a}{h\gamma}$$.
\end{lemma}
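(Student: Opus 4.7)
The plan is to reduce the diameter bound to a uniform bound on $\|M\|$ over $\mathcal{M}$ via the triangle inequality, and then sum a geometric series arising from the per-block constraint defining $\mathcal{M}$. Concretely, for any $M_i, M_j \in \mathcal{M}$, I would first write $\|M_i - M_j\| \le \|M_i\| + \|M_j\| \le 2 \sup_{M \in \mathcal{M}} \|M\|$, so the whole problem collapses to bounding a single element.

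Next, I would interpret the norm on the block-structured object $M = \{M^1, \ldots, M^{Hm}\}$ in a way that is subadditive across the blocks, giving $\|M\| \le \sum_{i=1}^{Hm} \|M^i\|$. The definition of $\mathcal{M}$ used throughout this section (consistent with the constraint in Lemma~\ref{lem:transition}) is $\|M^i\| \le a(1 - h\gamma)^{i-1}$, so that
\begin{align*}
\|M\| \le \sum_{i=1}^{Hm} a(1-h\gamma)^{i-1} \le \sum_{i=0}^{\infty} a(1-h\gamma)^i = \frac{a}{1 - (1 - h\gamma)} = \frac{a}{h\gamma}.
\end{align*}
Combining with the factor of two from the triangle inequality yields $D_f \le 2a/(h\gamma)$, as claimed.

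The key observation driving the $1/h$ blow-up in the diameter (which motivates the later choice of step size $\eta = \Theta(\sqrt{m/(Th)})$) is precisely that the contraction rate per step scales as $1-h\gamma$ rather than $1-\gamma$: each DAC coefficient $M^i$ is allowed to decay only at the slower discretized rate, so the geometric sum spans roughly $1/(h\gamma)$ terms. The only point requiring slight care is the consistency between the explicit algorithmic parameter ($2h\kappa^3$ with decay $(1-\gamma)^{i-1}$ in the algorithm box) and the generic constant $a$ with decay $(1-h\gamma)^{i-1}$ used in Lemma~\ref{lem:transition}; I would treat $a$ as the absorbed constant capturing both $\kappa^3$ and the factor $h$, making the final $\frac{2a}{h\gamma}$ bound a straightforward geometric series estimate rather than a genuinely difficult argument. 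There is no real obstacle here beyond bookkeeping.
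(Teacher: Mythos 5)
Your proof is correct and follows essentially the same route as the paper: decompose the block-structured $M$ subadditively, use the per-block constraint $\|M^k\|\le a(1-h\gamma)^{k-1}$, and sum the geometric series to obtain $\frac{2a}{h\gamma}$ (the paper merely applies the triangle inequality per block rather than first reducing to $2\sup_{M\in\mathcal{M}}\|M\|$, which is the same computation). Your remark about reconciling the algorithm's stated constraint with the $a(1-h\gamma)^{i-1}$ decay used in the analysis matches what the paper does implicitly.
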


\begin{proof}
    By the definition of $\mathcal{M}$, taking $l = Hm$ we know that 

    \begin{align*}
        \sup_{M_i, M_j \in \mathcal{M}} \|M_i - M_j\| &\le \sum_{k=1}^{Hm} \|M^k_i - M^k_j\| \\
        & \le \sum_{k=1}^{Hm} 2 a(1-h\gamma)^{k-1} \\
        & \le \frac{2a}{h\gamma} \,.
    \end{align*}
\end{proof}

\begin{lemma}\label{lem:lip}(Bounding the Lipschitz Constant) 
Consider two policy sequences $\left\{\Tilde{M}_{i-H} \ldots \Tilde{M}_{i-k} \ldots \Tilde{M}_i\right\}$ and $\left\{\Tilde{M}_{i-H} \ldots \hat{M}_{i-k} \ldots \Tilde{M}_i\right\}$ which differ in exactly one policy played at a time step $t-k$ for $k \in\{0, \ldots, H\}$. Then we have that
$$
\left|f_i\left(\Tilde{M}_{i-H} \ldots \Tilde{M}_{i-k} \ldots \Tilde{M}_i\right)-f_i\left(\Tilde{M}_{i-H} \ldots \hat{M}_{i-k} \ldots \Tilde{M}_i\right)\right| \leq C^2 \kappa^3 \kappa_B W_0 \sum_{j=0}^{Hm} \|\Tilde{M}^j_{i-k} - \hat{M}^j_{i-k}\|\,,
$$
where $C$ is a constant.
\end{lemma}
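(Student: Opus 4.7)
The plan is to express the differences $y_t - y'_t$ and $v_t - v'_t$ induced by replacing $\Tilde{M}_{i-k}$ by $\hat{M}_{i-k}$ explicitly through the formulas of Lemma~\ref{lem:evo}, bound them by a geometric sum in the perturbed policy components, and then apply the Lipschitz property of $c_t$ from Assumption~\ref{asmp:bounded_cost} and sum over $t \in [im,(i+1)m-1]$.

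First I would recall from Lemma~\ref{lem:evo} that on this interval
\[
y_{t+1} = h\sum_{j=0}^{2Hm}\Psi_{t,j}\hat w_{t-j}, \qquad
\Psi_{t,j} = Q^{j}_h\mathbf{1}_{j\le Hm} + h\sum_{r=0}^{Hm} Q^{r}_h B\, M^{j-r}_{t-r}\mathbf{1}_{j-r\in[1,Hm]}\,,
\]
and $v_t = -K y_t + h\sum_{q=1}^{Hm} M^{q}_{t}\hat w_{t-q}$. Changing $\Tilde{M}_{i-k}$ to $\hat{M}_{i-k}$ alters the underlying $M_s$ only on the single block $s\in[(i-k)m,(i-k+1)m-1]$ of length $m$; writing $\Delta^{q} := \Tilde{M}^{q}_{i-k}-\hat{M}^{q}_{i-k}$, the perturbation in $\Psi_{t,j}$ is therefore a sum of terms of the form $h\, Q^{r}_h B\, \Delta^{j-r}$ over those indices $r$ with $t-r$ in the block and $j-r\in[1,Hm]$.

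Next I would bound each piece using $\|Q^{r}_h\|\le\kappa^{2}(1-h\gamma)^{r}$ from the strongly stable assumption (equation~(\ref{equ:strong})), $\|B\|\le\kappa_B$ from Assumption~\ref{asmp:bounded_dynamic_matrix}, and $\|\hat w\|\le W_0$ from Lemma~\ref{lem:bound}. Plugging these in and observing that $\sum_{r}(1-h\gamma)^{r}=O(1/(h\gamma))$ yields $\|y_{t+1}-y'_{t+1}\|=O\bigl(h\kappa^{2}\kappa_B W_0/\gamma\bigr)\sum_{q}\|\Delta^{q}\|$. The bound on $\|v_t-v'_t\|$ is analogous and picks up an extra factor of $\kappa=\|K\|$ from the $-Ky_t$ term, together with a direct contribution from the $M^{q}_{t}$ term in $v_t$ which is nontrivial only when $t$ itself lies in the altered block (i.e., when $k=0$).

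Finally I would invoke $|c_t(y_t,v_t)-c_t(y'_t,v'_t)|\le GD(\|y_t-y'_t\|+\|v_t-v'_t\|)$ from Assumption~\ref{asmp:bounded_cost} (together with the uniform state/action bounds of Lemma~\ref{lem:dif}) and sum over the $m$ time steps in $[im,(i+1)m-1]$. The factor $m=C/h$ from this outer sum cancels the remaining $h$ inside the per-step bound and, combined with the $1/\gamma$ from the geometric sum and the $GD$ from the cost Lipschitz bound, assembles into the prefactor $C^{2}\kappa^{3}\kappa_B W_0$ multiplying $\sum_{j=0}^{Hm}\|\Tilde{M}^{j}_{i-k}-\hat{M}^{j}_{i-k}\|$, where $C^{2}$ absorbs the constants $G$, $D$, and $1/\gamma$. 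The main bookkeeping obstacle is identifying exactly which $(r,j)$ pairs in the expansion of $\Psi_{t,j}$ are perturbed, and verifying that the double sum over $r$ (geometric, giving $O(1/(h\gamma))$) and over $t$ (length-$m$ window, giving $m=C/h$) combines with the explicit $h$ factors to yield a constant of the stated order rather than something larger.
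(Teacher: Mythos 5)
Your proposal follows essentially the same route as the paper's proof: both express the perturbation of $y_t$ and $v_t$ through the $\Psi_{t,j}$ expansion of Lemma~\ref{lem:evo}, bound it using $\|Q_h^r\|\le\kappa^2$, $\|B\|\le\kappa_B$, $\|\hat w\|\le W_0$ with the per-step factor $h$, and then apply the $GD$-Lipschitz bound on $c_t$ from Assumption~\ref{asmp:bounded_cost} and sum over the $m=C/h$ steps of the block so that the factors of $h$ cancel. The only deviations are cosmetic: you control the sum over perturbed occurrences via the geometric series $\sum_r(1-h\gamma)^r\le 1/(h\gamma)$ where the paper simply counts the $m$ occurrences of the altered block with $\|Q_h^r\|\le\kappa^2$, and you explicitly flag the direct $h\sum_q(\Tilde M^q_{i}-\hat M^q_{i})\hat w_{t-q}$ contribution to $v_t$ in the $k=0$ case, which the paper's step $\|v_t-\Tilde v_t\|=\|-K(y_t-\Tilde y_t)\|$ silently omits but which is of the same $O(h)$ order and harmless.
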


\begin{proof}
    By the definition we have 

\begin{align*}
\left\|y_t-\Tilde{y}_t\right\| & = \|h\sum_{i=0}^{2Hm} h\sum_{j=0}^{Hm} Q^j_h B (M_{t-j}^{i-j} - \Tilde{M}_{t-j}^{i-j}) \mathbf{1}_{i-j \in[1, Hm]}  \hat{w}_{t-i} \| \\
&\le h^2 \kappa^2 \kappa_B W_0 \sum_{i=0}^{2Hm}\sum_{j=0}^{Hm}\|M_{t-j}^{i-j} - \Tilde{M}_{t-j}^{i-j}\| \mathbf{1}_{i-j \in[1, Hm]} \\
&\le h^2 \kappa^2 \kappa_B W_0 m \sum_{j=0}^{Hm} \|\Tilde{M}^j_{i-k} - \hat{M}^j_{i-k}\| \\
&= hC \kappa^2 \kappa_B W_0 \sum_{j=0}^{Hm} \|\Tilde{M}^j_{i-k} - \hat{M}^j_{i-k}\| \,.
\end{align*}

Where the first inequality is by $\|Q^j_h\| \le \kappa^2 (1-h\gamma)^{j-1} \le \kappa^2$ and lemma \ref{lem:bound} of bounded estimation disturbance, the second inequality is by the fact that $M_{i-k}$ have taken $m$ times, the last equality is by $m = \frac{C}{h}$.
Furthermore, we have that
$$
\left\|v_t-\Tilde{v}_t\right\| =\|-K\left(y_t-\Tilde{y}_t\right) \| \le hC \kappa^3 \kappa_B W_0 \sum_{j=0}^{Hm}\left\|\Tilde{M}_{i-k}^{j}-\hat{M}_{i-k}^{j}\right\| \,.
$$
Therefore using Assumption \ref{asmp:bounded_cost}, Lemma \ref{lem:bound} and Lemma \ref{lem:dif} we immediately get that
$$
\left|f_i\left(\Tilde{M}_{i-H} \ldots \Tilde{M}_{i-k} \ldots \Tilde{M}_i\right)-f_i\left(\Tilde{M}_{i-H} \ldots \hat{M}_{i-k} \ldots \Tilde{M}_i\right)\right| \leq C^2 \kappa^3 \kappa_B W_0  \sum_{j=0}^{Hm} \|\Tilde{M}^j_{i-k} - \hat{M}^j_{i-k}\| \,.
$$

\end{proof}

\begin{lemma}\label{lem:gra}
    (Bounding the Gradient) We have the following bound for the gradient:
$$
\left\|\nabla_M f_t(M \ldots M)\right\|_F \leq \frac{GDC \kappa^2(\kappa+1) W_0 \kappa_B}{\gamma}
$$
\end{lemma}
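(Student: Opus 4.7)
My plan is to apply the chain rule to the composition $\tilde f_t(M) := f_t(M,\ldots,M) = \sum_{s=tm}^{(t+1)m-1} c_s(y_s(M),v_s(M))$, using the closed-form expressions in Lemma \ref{lem:evo} for $y_s$ and $v_s$ when every argument slot $\tilde M_{i-H},\ldots,\tilde M_i$ collapses to the same $M$. First I would invoke Assumption \ref{asmp:bounded_cost} together with Lemma \ref{lem:dif} (which gives $\|y_s\|,\|v_s\|\le D$) to bound the cost gradients $\|\nabla_y c_s\|,\|\nabla_v c_s\|\le GD$ pointwise. The remaining work is to control the Jacobians $\partial y_s/\partial M$ and $\partial v_s/\partial M$ evaluated on the diagonal.

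For this, from Lemma \ref{lem:evo} we have
\[
y_{s+1} \;=\; h\sum_{i=0}^{2Hm}\Psi_{s,i}\hat w_{s-i},\qquad
\Psi_{s,i}=Q_h^{i}\mathbf 1_{i\le Hm}+h\sum_{j=0}^{Hm}Q_h^{j}BM^{i-j}\mathbf 1_{i-j\in[1,Hm]},
\]
which is linear in the shared parameter $M$. Differentiating with respect to the block $M^k$ and reindexing $i=j+k$ gives a sum of terms of the form $h^2 Q_h^{j}B(\cdot)\hat w_{s-j-k}$; applying the strong-stability bound $\|Q_h^{j}\|\le\kappa^{2}(1-h\gamma)^{j}$ (equation \eqref{equ:strong}), the estimated-noise bound $\|\hat w\|\le W_0$ from Lemma \ref{lem:bound}, and summing the geometric series over $j$ collapses $h\sum_j(1-h\gamma)^{j}$ to $1/\gamma$. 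This yields an operator-norm bound $\|\partial y_s/\partial M\|\le h W_0\kappa^{2}\kappa_B/\gamma$. For $v_s=-Ky_s+h\sum_j M^{j}\hat w_{s-j}$, the Jacobian picks up an extra $\kappa$-factor from $K$ acting on $\partial y_s/\partial M$ plus a direct DAC contribution of size at most $hW_0$, which I would combine to bound $\|\partial v_s/\partial M\|\le hW_0\kappa^{2}(\kappa+1)\kappa_B/\gamma$ after absorbing the unity term using $\kappa\ge 1$.

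Then I multiply these per-step Jacobian bounds by the cost-gradient bound $GD$ and sum over the $m$ time steps $s\in[tm,(t+1)m-1]$ in the OCO interval. The combination $m\cdot h=C$ converts the lingering factor of $h$ into the constant $C$, producing exactly $GDC\kappa^{2}(\kappa+1)W_0\kappa_B/\gamma$. I would then note that the Frobenius-norm estimate follows from the same argument applied blockwise, because the operator-norm bound I derived is uniform across choices of tangent direction in $M$ and the inner products from the chain rule only tighten the inequality.

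The main obstacle I anticipate is the index bookkeeping around the double sum defining $\Psi_{s,i}$: I need to swap sums, enforce both $j\in[0,Hm]$ and $i-j\in[1,Hm]$ simultaneously, and keep geometric decay $(1-h\gamma)^{j}$ inside the sum so it telescopes to $1/\gamma$ rather than accumulating an $Hm$ factor that would spoil the bound. A related subtlety is that the gradient of $\tilde f_t(M)$ is the sum of partial derivatives across all $H+1$ collapsed slots; naively summing slot Lipschitz constants from Lemma \ref{lem:lip} would produce an extraneous $H$ factor, so I must instead work directly with the closed-form Jacobian and rely on the decaying $\|Q_h^{j}\|$ to absorb the window contribution cleanly.
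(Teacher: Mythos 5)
Your proposal is correct and follows essentially the same route as the paper: both differentiate the closed-form linear expressions for $y_t$ and $v_t$ from Lemma \ref{lem:evo} with respect to (blocks/entries of) $M$, bound the resulting terms via $\|Q_h^j\|\le\kappa^2(1-h\gamma)^j$, $\|\hat w\|\le W_0$, $\|B\|\le\kappa_B$, sum the geometric series to pick up $1/(h\gamma)$, multiply by the cost-gradient bound $GD$, and use $mh=C$ over the $m$ steps of the OCO interval. The only difference is constant-level bookkeeping (you explicitly track the direct $h\sum_j M^j\hat w_{t-j}$ derivative in $v_t$, which the paper silently drops), and this does not change the argument.
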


\begin{proof}
Since $M$ is a matrix, the $\ell_2$ norm of the gradient $\nabla_M f_t$ corresponds to the Frobenius norm of the $\nabla_M f_t$ matrix. So it will be sufficient to derive an absolute value bound on $\nabla_{M_{p, q}^{[r]}} f_t(M, \ldots, M)$ for all $r, p, q$. To this end, we consider the following calculation. Using lemma \ref{lem:bound} we get that $y_t(M \ldots M), v_t(M \ldots M) \leq D$. Therefore, using Assumption \ref{asmp:bounded_cost} we have that
$$
\left|\nabla_{M_{p, q}^{[r]}} c_t(M \ldots M)\right| \leq G D\left(\left\|\frac{\partial y_t(M)}{\partial M_{p, q}^{[r]}}+\frac{\partial v_t(M \ldots M)}{\partial M_{p, q}^{[r]}}\right\|\right) .
$$

We now bound the quantities on the right-hand side:
$$
\begin{aligned}
\left\|\frac{\delta y_t(M \ldots M)}{\delta M_{p, q}^{[r]}}\right\| & =\left\|h\sum_{i=0}^{2 Hm} h\sum_{j=1}^{Hm}\left[\frac{\partial Q_h^j B M^{[i-j]}}{\partial M_{p, q}^{[r]}}\right] \hat{w}_{t-i} \mathbf{1}_{i-j \in[1, H]}\right\| \\
& \leq h^2\sum_{i=r}^{r+Hm}\left\|\left[\frac{\partial Q_h^{i-r} B M^{[r]}}{\partial M_{p, q}^{[r]}}\right] w_{t-i}\right\| \\
& \le h^2 \kappa^2 W_0 \kappa_B \frac{1}{h\gamma} = \frac{h \kappa^2 W_0 \kappa_B}{\gamma} \,.
\end{aligned}
$$
Similarly,
$$
\begin{aligned}
\left\|\frac{\partial v_t(M \ldots M)}{\partial M_{p, q}^{[r]}}\right\|  \leq \kappa\left\|\frac{\delta y_t(M \ldots M)}{\delta M_{p, q}^{[r]}}\right\|  \leq \kappa \frac{h \kappa^2 W_0 \kappa_B}{\gamma}  \le  \frac{h \kappa^3 W_0 \kappa_B}{\gamma}\,.
\end{aligned}
$$
Combining the above inequalities with $$
f_i\left(\Tilde{M}_{i-H}, \ldots, \Tilde{M}_i\right)=\sum_{t = im}^{(i+1)m - 1}c_t\left(y_t\left(\Tilde{M}_{i-H}, \ldots, \Tilde{M}_{i}\right), v_t\left(\Tilde{M}_{i-H}, \ldots, \Tilde{M}_i\right)\right) \,.
$$ gives the bound that 
\begin{align*}
    \left\|\nabla_M f_t(M \ldots M)\right\|_F \leq \frac{GDC \kappa^2(\kappa+1) W_0 \kappa_B}{\gamma}\,.
\end{align*}
\end{proof}

Finally we prove Lemma \ref{lem:oco}:

\oco*

\begin{proof}

By Lemma \ref{lem:OGD} we have 
$$
R_2 \le 2D_f \sqrt{G_f\left(G_f+L H^2\right) p}
$$

By Lemma \ref{lem:dia}, Lemma \ref{lem:lip}, and Lemma \ref{lem:gra} we have
\begin{align*}
    R_2 & \le 2D_f \sqrt{G_f\left(G_f+L H^2\right) p} \\
    & \le 2 \frac{2a}{h\gamma} \sqrt{\frac{GDC \kappa^2(\kappa+1) W_0 \kappa_B}{\gamma}(\frac{GDC \kappa^2(\kappa+1) W_0 \kappa_B}{\gamma}+ C^2 \kappa^3 \kappa_B W_0 H^2) \frac{n}{m}} \\
    & \le \frac{4a}{\gamma} \sqrt{\frac{GDC^2 \kappa^2(\kappa+1) W_0 \kappa_B}{\gamma}(\frac{GDC \kappa^2(\kappa+1) W_0 \kappa_B}{\gamma}+ C^2 \kappa^3 \kappa_B W_0 H^2) \frac{n}{h}} \,.
\end{align*}
\end{proof}

\section{Proof of Lemma \ref{lem:opt}}
In this section, we will prove the approximation value of DAC policy and optimal policy is sufficiently small. First, we introduce the following:

\begin{lemma}\label{lem:optimal}
    For any two $(\kappa, \gamma)$-strongly stable matrices $K^*, K$, there exists $M=\left(M^{1}, \ldots, M^{Hm}\right)$ where
$$
M^{i}=\left(K - K^*\right)\left(I + h(A-B K^*)\right)^{i-1} \,,
$$
such that
$$
c_{t}(x_{t}(M), u_{t}(M))- c_{t}(x^*_{t}, u^*_{t}) \leq GDW_0\kappa^3 a(lh \kappa_B + 1)(1-h\gamma)^{Hm}   \,.
$$
\end{lemma}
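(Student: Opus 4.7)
\textbf{Proof proposal for Lemma \ref{lem:optimal}.} The plan is to exploit a telescoping identity that makes the DAC transition kernels $\Psi_{t,i}$ collapse to pure powers of the optimal closed-loop matrix. Set $Q_h^\ast = I + h(A - BK^\ast)$, so that $hB(K - K^\ast) = Q_h^\ast - Q_h$; then the prescribed choice $M^{i} = (K - K^\ast)(Q_h^\ast)^{i-1}$ gives $hBM^{i} = (Q_h^\ast)^{i} - Q_h(Q_h^\ast)^{i-1}$. Plugging this into the expression for $\Psi_{t,i}$ from Lemma~\ref{lem:evo}, one checks by telescoping that, for every $i \leq l = Hm$,
\begin{align*}
\Psi_{t,i} \;=\; Q_h^{i} + \sum_{j=0}^{i-1}\bigl[Q_h^{j}(Q_h^\ast)^{i-j} - Q_h^{j+1}(Q_h^\ast)^{i-j-1}\bigr] \;=\; (Q_h^\ast)^{i}.
\end{align*}
This is the main algebraic step; the remaining coefficients for $i \in (l, 2l]$ are controlled directly by Lemma~\ref{lem:transition}.

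Next, I would let $\tilde{x}^\ast_t$ denote the trajectory that $K^\ast$ would produce if restarted from zero at time $t - Hm$. Running the derivation of Lemma~\ref{lem:evo} with $K$ replaced by $K^\ast$ gives $\tilde{x}^\ast_{t+1} = h\sum_{i=0}^{Hm}(Q_h^\ast)^{i}\hat{w}_{t-i}$. Comparing this with $y_{t+1} = h\sum_{i=0}^{2Hm}\Psi_{t,i}\hat{w}_{t-i}$, the first $Hm+1$ coefficients agree exactly by the telescoping identity, so the discrepancy reduces to the tail $h\sum_{i>Hm}\Psi_{t,i}\hat{w}_{t-i}$, which Lemma~\ref{lem:transition} together with $\|\hat{w}_t\|\leq W_0$ bounds on the order of $W_0\kappa^2 a(lh\kappa_B+1)(1-h\gamma)^{Hm}$. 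Strong stability of $K^\ast$ supplies $\|x^\ast_t - \tilde{x}^\ast_t\| \leq \kappa^2(1-h\gamma)^{Hm} D$, and Lemma~\ref{lem:dif} gives the analogous bound on $\|x_t(M) - y_t\|$.

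Summing these three contributions bounds $\|x_t(M) - x^\ast_t\|$, and the corresponding action difference $\|u_t(M) - u^\ast_t\|$ picks up an extra factor of $\kappa$ from the controllers $K$ and $K^\ast$ (using $u_t(M) = -Kx_t(M) + h\sum_i M^i \hat{w}_{t-i}$ and $u^\ast_t = -K^\ast x^\ast_t$ together with the identity used above to rewrite the disturbance sum as $(K-K^\ast)\tilde{x}^\ast_t$ modulo the same tail). Assumption~\ref{asmp:bounded_cost} then yields
\begin{align*}
|c_t(x_t(M), u_t(M)) - c_t(x^\ast_t, u^\ast_t)| \;\leq\; GD\bigl(\|x_t(M) - x^\ast_t\| + \|u_t(M) - u^\ast_t\|\bigr),
\end{align*}
and substituting the estimates above produces the claimed inequality. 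The main obstacle is keeping the three sources of error aligned --- the $\Psi_{t,i}$ tail for $i > Hm$, the forgetting of $x_{t-Hm}$ in the ideal state $y_t$, and the analogous forgetting in $\tilde{x}^\ast_t$ --- and verifying that each decays at the common rate $(1-h\gamma)^{Hm}$, which is what makes the clean single-factor bound possible.
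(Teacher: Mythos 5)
Your proposal is correct and follows essentially the same route as the paper: the identity $hBM^{i}=Q_h(K^*)^{i}-Q_hQ_h(K^*)^{i-1}$ telescopes to give $\Psi_{t,i}=Q_h(K^*)^{i}$ for $i\le Hm$, after which the state and action gaps reduce to tails controlled by Lemma~\ref{lem:transition} and strong stability, and Assumption~\ref{asmp:bounded_cost} converts these into the cost bound. Your extra bookkeeping via the truncated trajectories $y_t$ and $\tilde{x}^*_t$ is only a mild repackaging of the paper's direct comparison of the two full trajectories from $x_0=0$, and matches the claimed bound up to the same level of looseness in constants as the paper itself.
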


\begin{proof}
    Denote $Q_h(K) = I + h(A-BK)$, $Q_h(K^*) = I + h(A-BK^*)$. By Lemma \ref{lem:evo} we have 
$$
x^*_{t+1}=h\sum_{i=0}^tQ^i_h(K^*) \hat{w}_{t-i} \,.
$$
 Consider the following calculation for $i \le Hm$ and $
M^{i}=\left(K - K^*\right)\left(I + h(A-B K^*)\right)^{i-1}
$:
\begin{align*}
\Psi_{t, i}\left(M, \ldots, M\right) &=Q^i_h(K) +h\sum_{j=1}^{i} Q^{i-j}_h(K) B M^j\\
&=Q^i_h(K) +h\sum_{j=1}^{i} Q^{i-j}_h(K) B\left(K - K^*\right) Q^{j-1}_h(K^*) \\
&=Q^i_h(K) +\sum_{j=1}^{i} Q^{i-j}_h(K) (Q_h(K^*) - Q_h(K)) Q^{j-1}_h(K^*) \\
& =Q^i_h(K^*) \,,
\end{align*}
where the final equality follows as the sum telescopes. Therefore, we have that
$$
x_{t+1}(M)=h\sum_{i=0}^{Hm}Q^i_h(K^*)  \hat{w}_{t-i} + h\sum_{i=Hm+1}^{t} \Psi_{t, i} \hat{w}_{t-i} \,.
$$
Then we obtain that
$$
\left\|x_{t+1}(M)-x^*_{t+1}\right\| \leq hW_0\sum_{i=Hm+1}^t(\left\|\Psi_{t, i}\left(M_*\right)\right\| + \|Q^i_h(K^*)\|)  \,.
$$

Using Definition \ref{def:linear_policy} and Lemma \ref{lem:evo} we finally get 
\begin{align*}
    \left\|x_{t+1}(M)-x^*_{t+1}\right\| & \leq hW_0 (\sum_{i=Hm+1}^t ((lh \kappa_B + 1) a\kappa^2(1-h\gamma)^{i-1}) + \kappa^2 (1-h\gamma)^i) \\
    & \le W_0 (lh \kappa_B + 2) a\kappa^2(1-h\gamma)^{Hm}\,.
\end{align*}

We also have
$$
\begin{aligned}
\left\|u^*_t-u_t\left(M\right)\right\| & =\left\|-K^* x^*_t+K x_t\left(M\right)-h\sum_{i=0}^{Hm} M^i \hat{w}_{t-i} \right\| \\
& = \|(K-K^*)x^*_t + K(x_t(M) - x^*_t) -h\sum_{i=0}^{Hm} M^i \hat{w}_{t-i} \|\\
& = \|(K-K^*)h\sum_{i=0}^{t-1}Q^i_h(K^*) \hat{w}_{t-i} + K(x_t(M) - x^*_t) -h\sum_{i=0}^{Hm} M^i \hat{w}_{t-i} \|\\
& = \| K(x_t(M) - x^*_t) -h\sum_{i=Hm+1}^{t-1} (K-K^*)Q^{i-1}_h(K^*) \hat{w}_{t-i} \|\\
& = \| Kh\sum_{i=Hm+1}^{t-1} (\Psi_{t, i}-Q^{i-1}_h(K^*))\hat{w}_{t-i} -h\sum_{i=Hm+1}^{t-1} (K-K^*)Q^{i-1}_h(K^*) \hat{w}_{t-i} \|\\
& = \left\|h\sum_{i=Hm+1}^{t-1} K^*\left(Q^{i-1}_h(K^*)+\Psi_{t, i}\right) \hat{w}_{t-i}\right\| \\
& \leq  W_0 \kappa ((1-h\gamma)^{Hm} +  a(lh \kappa_B + 1) \kappa^2(1-h\gamma)^{Hm}) \\
&=   W_0 \kappa (a(lh \kappa_B + 1) \kappa^2 + 1)(1-h\gamma)^{Hm}) \,,
\end{aligned}
$$
where the inequality is by Definition \ref{def:linear_policy} and Lemma \ref{lem:transition}.

Finally, we have 
\begin{align*}
&\left|c_t\left(x_t(M), u_t(M)\right)-c_t\left(x^*_t, u^*_t\right)\right| \\
 \le& \left|c_t\left(x_t(M), u_t(M)\right)-c_t\left(x^*_t, u_t(M)\right)\right| + \left|c_t\left(x^*_t, u_t(M)\right)-c_t\left(x^*_t, u^*_t\right)\right| \\
 \le& GD |x_t(M) - x^*_t| + GD |u_t(M) - u^*_t| \\
 \le &  GDW_0\kappa^3 a(lh \kappa_B + 1)(1-h\gamma)^{Hm} \,,
\end{align*}
where the second inequality is by Assumption \ref{asmp:bounded_cost}.
\end{proof}

Then we can prove our main lemma:
\opt*
\begin{proof}
By choosing 
$$
M^{i}=\left(K - K^*\right)\left(I + h(A-B K^*)\right)^{i-1} \,.
$$
We know that  
$$
\|M^{i}\|=\|\left(K - K^*\right)\left(I + h(A-B K^*)\right)^{i-1}\| \le 2\kappa^3 (1-\gamma)^{i-1}  \,.
$$
Therefore choose $a = 2\kappa^3$ we have $M = \{M^i\}$ in the DAC policy update class $\mathcal{M}$.

   Then we have the analysis of the regret:
\begin{align*}
    R_3 &= \min_{M\in \mathcal{M}}\sum_{i=0}^{p - 1} f_i(M, ... , M)- \sum_{i=0}^{p - 1} \sum_{j=im}^{(i+1)m - 1} c_{i}(x^*_{i}, u^*_{i}) \\
    &\le \min_{M\in \mathcal{M}}\sum_{i=0}^{p - 1}\sum_{j=im}^{(i+1)m - 1} c_{i}(x_{i}(M), u_{i}(M))- \sum_{i=0}^{p - 1} \sum_{j=im}^{(i+1)m - 1} c_{i}(x^*_{i}, u^*_{i}) +  n\kappa^2(1+\kappa)(1-h\gamma)^{Hm+1} D\\
    & \le 3n(1-h\gamma)^{Hm}GDW_0\kappa^3 a(lh \kappa_B + 1)\,,
\end{align*}
where the first inequality is by Lemma \ref{lem:dif} and the second inequality is by Lemma \ref{lem:optimal}.

\end{proof}

\clearpage
\section*{NeurIPS Paper Checklist}

\begin{enumerate}

\item {\bf Claims}
    \item[] Question: Do the main claims made in the abstract and introduction accurately reflect the paper's contributions and scope?
    \item[] Answer: \answerYes{} 
    \item[] Justification: We clarify our contributions and basic problem setups in both abstract and introduction.
    \item[] Guidelines: 
    \begin{itemize}
        \item The answer NA means that the abstract and introduction do not include the claims made in the paper.
        \item The abstract and/or introduction should clearly state the claims made, including the contributions made in the paper and important assumptions and limitations. A No or NA answer to this question will not be perceived well by the reviewers. 
        \item The claims made should match theoretical and experimental results, and reflect how much the results can be expected to generalize to other settings. 
        \item It is fine to include aspirational goals as motivation as long as it is clear that these goals are not attained by the paper. 
    \end{itemize}

\item {\bf Limitations}
    \item[] Question: Does the paper discuss the limitations of the work performed by the authors?
    \item[] Answer: \answerYes{} 
    \item[] Justification: We discuss the limitation of our paper in Section~\ref{sec:conclusion}.
    \item[] Guidelines:
    \begin{itemize}
        \item The answer NA means that the paper has no limitation while the answer No means that the paper has limitations, but those are not discussed in the paper. 
        \item The authors are encouraged to create a separate "Limitations" section in their paper.
        \item The paper should point out any strong assumptions and how robust the results are to violations of these assumptions (e.g., independence assumptions, noiseless settings, model well-specification, asymptotic approximations only holding locally). The authors should reflect on how these assumptions might be violated in practice and what the implications would be.
        \item The authors should reflect on the scope of the claims made, e.g., if the approach was only tested on a few datasets or with a few runs. In general, empirical results often depend on implicit assumptions, which should be articulated.
        \item The authors should reflect on the factors that influence the performance of the approach. For example, a facial recognition algorithm may perform poorly when image resolution is low or images are taken in low lighting. Or a speech-to-text system might not be used reliably to provide closed captions for online lectures because it fails to handle technical jargon.
        \item The authors should discuss the computational efficiency of the proposed algorithms and how they scale with dataset size.
        \item If applicable, the authors should discuss possible limitations of their approach to address problems of privacy and fairness.
        \item While the authors might fear that complete honesty about limitations might be used by reviewers as grounds for rejection, a worse outcome might be that reviewers discover limitations that aren't acknowledged in the paper. The authors should use their best judgment and recognize that individual actions in favor of transparency play an important role in developing norms that preserve the integrity of the community. Reviewers will be specifically instructed to not penalize honesty concerning limitations.
    \end{itemize}

\item {\bf Theory Assumptions and Proofs}
    \item[] Question: For each theoretical result, does the paper provide the full set of assumptions and a complete (and correct) proof?
    \item[] Answer: \answerYes{} 
    \item[] Justification: We provide the full set of assumptions and a complete proof.
    \item[] Guidelines:
    \begin{itemize}
        \item The answer NA means that the paper does not include theoretical results. 
        \item All the theorems, formulas, and proofs in the paper should be numbered and cross-referenced.
        \item All assumptions should be clearly stated or referenced in the statement of any theorems.
        \item The proofs can either appear in the main paper or the supplemental material, but if they appear in the supplemental material, the authors are encouraged to provide a short proof sketch to provide intuition. 
        \item Inversely, any informal proof provided in the core of the paper should be complemented by formal proofs provided in appendix or supplemental material.
        \item Theorems and Lemmas that the proof relies upon should be properly referenced. 
    \end{itemize}

    \item {\bf Experimental Result Reproducibility}
    \item[] Question: Does the paper fully disclose all the information needed to reproduce the main experimental results of the paper to the extent that it affects the main claims and/or conclusions of the paper (regardless of whether the code and data are provided or not)?
    \item[] Answer: \answerYes{} 
    \item[] Justification: We disclose the experiment details in Section~\ref{sec:experiment}.
    \item[] Guidelines:
    \begin{itemize}
        \item The answer NA means that the paper does not include experiments.
        \item If the paper includes experiments, a No answer to this question will not be perceived well by the reviewers: Making the paper reproducible is important, regardless of whether the code and data are provided or not.
        \item If the contribution is a dataset and/or model, the authors should describe the steps taken to make their results reproducible or verifiable. 
        \item Depending on the contribution, reproducibility can be accomplished in various ways. For example, if the contribution is a novel architecture, describing the architecture fully might suffice, or if the contribution is a specific model and empirical evaluation, it may be necessary to either make it possible for others to replicate the model with the same dataset, or provide access to the model. In general. releasing code and data is often one good way to accomplish this, but reproducibility can also be provided via detailed instructions for how to replicate the results, access to a hosted model (e.g., in the case of a large language model), releasing of a model checkpoint, or other means that are appropriate to the research performed.
        \item While NeurIPS does not require releasing code, the conference does require all submissions to provide some reasonable avenue for reproducibility, which may depend on the nature of the contribution. For example
        \begin{enumerate}
            \item If the contribution is primarily a new algorithm, the paper should make it clear how to reproduce that algorithm.
            \item If the contribution is primarily a new model architecture, the paper should describe the architecture clearly and fully.
            \item If the contribution is a new model (e.g., a large language model), then there should either be a way to access this model for reproducing the results or a way to reproduce the model (e.g., with an open-source dataset or instructions for how to construct the dataset).
            \item We recognize that reproducibility may be tricky in some cases, in which case authors are welcome to describe the particular way they provide for reproducibility. In the case of closed-source models, it may be that access to the model is limited in some way (e.g., to registered users), but it should be possible for other researchers to have some path to reproducing or verifying the results.
        \end{enumerate}
    \end{itemize}

\item {\bf Open access to data and code}
    \item[] Question: Does the paper provide open access to the data and code, with sufficient instructions to faithfully reproduce the main experimental results, as described in supplemental material?
    \item[] Answer: \answerNo{} 
    \item[] Justification: Our code is very simple, just use the traditional SAC algorithm with one line implement. Our main contribution is the theoretical analysis.
    \item[] Guidelines:
    \begin{itemize}
        \item The answer NA means that paper does not include experiments requiring code.
        \item Please see the NeurIPS code and data submission guidelines (\url{https://nips.cc/public/guides/CodeSubmissionPolicy}) for more details.
        \item While we encourage the release of code and data, we understand that this might not be possible, so “No” is an acceptable answer. Papers cannot be rejected simply for not including code, unless this is central to the contribution (e.g., for a new open-source benchmark).
        \item The instructions should contain the exact command and environment needed to run to reproduce the results. See the NeurIPS code and data submission guidelines (\url{https://nips.cc/public/guides/CodeSubmissionPolicy}) for more details.
        \item The authors should provide instructions on data access and preparation, including how to access the raw data, preprocessed data, intermediate data, and generated data, etc.
        \item The authors should provide scripts to reproduce all experimental results for the new proposed method and baselines. If only a subset of experiments are reproducible, they should state which ones are omitted from the script and why.
        \item At submission time, to preserve anonymity, the authors should release anonymized versions (if applicable).
        \item Providing as much information as possible in supplemental material (appended to the paper) is recommended, but including URLs to data and code is permitted.
    \end{itemize}

\item {\bf Experimental Setting/Details}
    \item[] Question: Does the paper specify all the training and test details (e.g., data splits, hyperparameters, how they were chosen, type of optimizer, etc.) necessary to understand the results?
    \item[] Answer: \answerYes{} 
    \item[] Justification: We specify all the training and test details in~\ref{sec:experiment}.
    \item[] Guidelines:
    \begin{itemize}
        \item The answer NA means that the paper does not include experiments.
        \item The experimental setting should be presented in the core of the paper to a level of detail that is necessary to appreciate the results and make sense of them.
        \item The full details can be provided either with the code, in appendix, or as supplemental material.
    \end{itemize}

\item {\bf Experiment Statistical Significance}
    \item[] Question: Does the paper report error bars suitably and correctly defined or other appropriate information about the statistical significance of the experiments?
    \item[] Answer: \answerYes{} 
    \item[] Justification: For each experiment we use 3 random seeds and take the average.
    \item[] Guidelines:
    \begin{itemize}
        \item The answer NA means that the paper does not include experiments.
        \item The authors should answer "Yes" if the results are accompanied by error bars, confidence intervals, or statistical significance tests, at least for the experiments that support the main claims of the paper.
        \item The factors of variability that the error bars are capturing should be clearly stated (for example, train/test split, initialization, random drawing of some parameter, or overall run with given experimental conditions).
        \item The method for calculating the error bars should be explained (closed form formula, call to a library function, bootstrap, etc.)
        \item The assumptions made should be given (e.g., Normally distributed errors).
        \item It should be clear whether the error bar is the standard deviation or the standard error of the mean.
        \item It is OK to report 1-sigma error bars, but one should state it. The authors should preferably report a 2-sigma error bar than state that they have a 96\% CI, if the hypothesis of Normality of errors is not verified.
        \item For asymmetric distributions, the authors should be careful not to show in tables or figures symmetric error bars that would yield results that are out of range (e.g. negative error rates).
        \item If error bars are reported in tables or plots, The authors should explain in the text how they were calculated and reference the corresponding figures or tables in the text.
    \end{itemize}

\item {\bf Experiments Compute Resources}
    \item[] Question: For each experiment, does the paper provide sufficient information on the computer resources (type of compute workers, memory, time of execution) needed to reproduce the experiments?
    \item[] Answer: \answerYes{} 
    \item[] Justification: We specify all the computational resources in~\ref{sec:experiment}.
    \item[] Guidelines:
    \begin{itemize}
        \item The answer NA means that the paper does not include experiments.
        \item The paper should indicate the type of compute workers CPU or GPU, internal cluster, or cloud provider, including relevant memory and storage.
        \item The paper should provide the amount of compute required for each of the individual experimental runs as well as estimate the total compute. 
        \item The paper should disclose whether the full research project required more compute than the experiments reported in the paper (e.g., preliminary or failed experiments that didn't make it into the paper). 
    \end{itemize}
    
\item {\bf Code Of Ethics}
    \item[] Question: Does the research conducted in the paper conform, in every respect, with the NeurIPS Code of Ethics \url{https://neurips.cc/public/EthicsGuidelines}?
    \item[] Answer: \answerYes{} 
    \item[] Justification: We conform with the NeurIPS Code of Ethics.
    \item[] Guidelines:
    \begin{itemize}
        \item The answer NA means that the authors have not reviewed the NeurIPS Code of Ethics.
        \item If the authors answer No, they should explain the special circumstances that require a deviation from the Code of Ethics.
        \item The authors should make sure to preserve anonymity (e.g., if there is a special consideration due to laws or regulations in their jurisdiction).
    \end{itemize}

\item {\bf Broader Impacts}
    \item[] Question: Does the paper discuss both potential positive societal impacts and negative societal impacts of the work performed?
    \item[] Answer: \answerNA{} 
    \item[] Justification: Our work is about the theory on online control,
which does not seem to have evident societal impacts.
    \item[] Guidelines:
    \begin{itemize}
        \item The answer NA means that there is no societal impact of the work performed.
        \item If the authors answer NA or No, they should explain why their work has no societal impact or why the paper does not address societal impact.
        \item Examples of negative societal impacts include potential malicious or unintended uses (e.g., disinformation, generating fake profiles, surveillance), fairness considerations (e.g., deployment of technologies that could make decisions that unfairly impact specific groups), privacy considerations, and security considerations.
        \item The conference expects that many papers will be foundational research and not tied to particular applications, let alone deployments. However, if there is a direct path to any negative applications, the authors should point it out. For example, it is legitimate to point out that an improvement in the quality of generative models could be used to generate deepfakes for disinformation. On the other hand, it is not needed to point out that a generic algorithm for optimizing neural networks could enable people to train models that generate Deepfakes faster.
        \item The authors should consider possible harms that could arise when the technology is being used as intended and functioning correctly, harms that could arise when the technology is being used as intended but gives incorrect results, and harms following from (intentional or unintentional) misuse of the technology.
        \item If there are negative societal impacts, the authors could also discuss possible mitigation strategies (e.g., gated release of models, providing defenses in addition to attacks, mechanisms for monitoring misuse, mechanisms to monitor how a system learns from feedback over time, improving the efficiency and accessibility of ML).
    \end{itemize}
    
\item {\bf Safeguards}
    \item[] Question: Does the paper describe safeguards that have been put in place for responsible release of data or models that have a high risk for misuse (e.g., pretrained language models, image generators, or scraped datasets)?
    \item[] Answer: \answerNA{} 
    \item[] Justification: The paper poses no such risks.
    \item[] Guidelines:
    \begin{itemize}
        \item The answer NA means that the paper poses no such risks.
        \item Released models that have a high risk for misuse or dual-use should be released with necessary safeguards to allow for controlled use of the model, for example by requiring that users adhere to usage guidelines or restrictions to access the model or implementing safety filters. 
        \item Datasets that have been scraped from the Internet could pose safety risks. The authors should describe how they avoided releasing unsafe images.
        \item We recognize that providing effective safeguards is challenging, and many papers do not require this, but we encourage authors to take this into account and make a best faith effort.
    \end{itemize}

\item {\bf Licenses for existing assets}
    \item[] Question: Are the creators or original owners of assets (e.g., code, data, models), used in the paper, properly credited and are the license and terms of use explicitly mentioned and properly respected?
    \item[] Answer: \answerYes{} 
    \item[] Justification: We add citations for all datasets we used.
    \item[] Guidelines:
    \begin{itemize}
        \item The answer NA means that the paper does not use existing assets.
        \item The authors should cite the original paper that produced the code package or dataset.
        \item The authors should state which version of the asset is used and, if possible, include a URL.
        \item The name of the license (e.g., CC-BY 4.0) should be included for each asset.
        \item For scraped data from a particular source (e.g., website), the copyright and terms of service of that source should be provided.
        \item If assets are released, the license, copyright information, and terms of use in the package should be provided. For popular datasets, \url{paperswithcode.com/datasets} has curated licenses for some datasets. Their licensing guide can help determine the license of a dataset.
        \item For existing datasets that are re-packaged, both the original license and the license of the derived asset (if it has changed) should be provided.
        \item If this information is not available online, the authors are encouraged to reach out to the asset's creators.
    \end{itemize}

\item {\bf New Assets}
    \item[] Question: Are new assets introduced in the paper well documented and is the documentation provided alongside the assets?
    \item[] Answer: \answerNA{} 
    \item[] Justification: The paper does not release new assets.
    \item[] Guidelines:
    \begin{itemize}
        \item The answer NA means that the paper does not release new assets.
        \item Researchers should communicate the details of the dataset/code/model as part of their submissions via structured templates. This includes details about training, license, limitations, etc. 
        \item The paper should discuss whether and how consent was obtained from people whose asset is used.
        \item At submission time, remember to anonymize your assets (if applicable). You can either create an anonymized URL or include an anonymized zip file.
    \end{itemize}

\item {\bf Crowdsourcing and Research with Human Subjects}
    \item[] Question: For crowdsourcing experiments and research with human subjects, does the paper include the full text of instructions given to participants and screenshots, if applicable, as well as details about compensation (if any)? 
    \item[] Answer: \answerNA{} 
    \item[] Justification: The paper does not involve crowdsourcing nor research with human subjects.
    \item[] Guidelines:
    \begin{itemize}
        \item The answer NA means that the paper does not involve crowdsourcing nor research with human subjects.
        \item Including this information in the supplemental material is fine, but if the main contribution of the paper involves human subjects, then as much detail as possible should be included in the main paper. 
        \item According to the NeurIPS Code of Ethics, workers involved in data collection, curation, or other labor should be paid at least the minimum wage in the country of the data collector. 
    \end{itemize}

\item {\bf Institutional Review Board (IRB) Approvals or Equivalent for Research with Human Subjects}
    \item[] Question: Does the paper describe potential risks incurred by study participants, whether such risks were disclosed to the subjects, and whether Institutional Review Board (IRB) approvals (or an equivalent approval/review based on the requirements of your country or institution) were obtained?
    \item[] Answer: \answerNA{} 
    \item[] Justification: The paper does not involve crowdsourcing nor research with human subjects.
    \item[] Guidelines:
    \begin{itemize}
        \item The answer NA means that the paper does not involve crowdsourcing nor research with human subjects.
        \item Depending on the country in which research is conducted, IRB approval (or equivalent) may be required for any human subjects research. If you obtained IRB approval, you should clearly state this in the paper. 
        \item We recognize that the procedures for this may vary significantly between institutions and locations, and we expect authors to adhere to the NeurIPS Code of Ethics and the guidelines for their institution. 
        \item For initial submissions, do not include any information that would break anonymity (if applicable), such as the institution conducting the review.
    \end{itemize}

\end{enumerate}

\end{document}